\newtheorem{theorem}{Theorem}[section]
\newtheorem{proposition}[theorem]{Proposition}
\theoremstyle{definition}
\newtheorem{definition}[theorem]{Definition}
\newtheorem{algorithm}[theorem]{Algorithm}
\newtheorem{remark}[theorem]{Remark}
\numberwithin{equation}{section}
\numberwithin{figure}{section}
\renewcommand{\subset}{\subseteq}
\renewcommand{\hat}{\widehat}
\renewcommand{\tilde}{\widetilde}
\renewcommand{\epsilon}{\varepsilon}
\def\diam{\text{diam}}
\def\supp{\text{supp}}
\def\<{\langle}
\def\>{\rangle}
\def\({\Big(}
\def\){\Big)}
\def\C{\mathbb{C}}
\def\F{\mathcal{F}}
\def\P{\mathcal{P}}
\def\R{\mathbb{R}}
\def\S{\mathcal{S}}
\def\W{\mathcal{W}}
\def\Z{\mathbb{Z}}
\title{Analysis of time-frequency scattering transforms}
\author{Wojciech Czaja}
\address{Department of Mathematics, University of Maryland, College Park, MD 20742}
\email{wojtek@math.umd.edu}
\author{Weilin Li}
\address{Department of Mathematics, University of Maryland, College Park, MD 20742}
\email{wl298@math.umd.edu}
\keywords{Scattering transform, Gabor frames, time-frequency, convolutional neural networks, feature extractor}
\subjclass[2010]{42C15, 47N99, 68T10}
\date{}
\begin{document}
	
\begin{abstract}
	In this paper we address the problem of constructing a feature extractor which combines Mallat's scattering transform framework with time-frequency (Gabor) representations. To do this, we introduce a class of frames, called \emph{uniform covering frames}, which includes a variety of semi-discrete Gabor systems. Incorporating a uniform covering frame with a neural network structure yields the \emph{Fourier scattering transform} $\S_\F$ and the \emph{truncated Fourier scattering transform}. We prove that $\S_\F$ propagates energy along frequency decreasing paths and its energy decays exponentially as a function of the depth. These quantitative estimates are fundamental in showing that $\S_\F$ satisfies the typical scattering transform properties, and in controlling the information loss due to width and depth truncation. We introduce the \emph{fast Fourier scattering transform} algorithm, and illustrate the algorithm's performance. The time-frequency covering techniques developed in this paper are flexible and give insight into the analysis of scattering transforms.
\end{abstract}

\maketitle

\section{Introduction}

Introduced by LeCun \cite{le1990handwritten}, a \emph{convolutional neural network} is a composition of a finite number of transformations, where each transformation is one of three types: a convolution against a filter bank, a non-linearity, and an averaging. Convolutional neural networks approximate functions through an adaptive and iterative learning process and have been extremely successful for classifying data \cite{le1990handwritten, hinton2012deep, krizhevsky2012imagenet}. Since they have complex architectures and their parameters are learned through ``black-box" optimization schemes, training is computationally expensive and there is no widely accepted rigorous theory that explains their remarkable success.  

Recently, Mallat \cite{mallat2012group} provided an intriguing example of a predetermined convolutional neural network with formal mathematical guarantees. His \emph{windowed scattering transform} $\S_\W$ propagates the input information through multiple iterations of the wavelet transform and the complex modulus, and finishes the process with a local averaging. It is typically used as a \emph{feature extractor}, which is a transformation that organizes the input data into a particular form, while simultaneously discarding irrelevant information.  When combined with standard classifiers, the windowed scattering transform has achieved state-of-the-art results for several classification problems \cite{bruna2013invariant, sifre2013rotation, hirn2015quantum}.  

While Mallat's results are impressive, there are several reasons to consider an alternative case where a non-wavelet frame is used for scattering.

\begin{enumerate}[(a)]\itemsep+.5em
	
	\item 
	Since neural networks were originally inspired by the structure of the brain, it makes sense to mimic the visual system of mammals when designing a feature extractor for image classification. The ground-breaking work of Daugman \cite{daugman1985uncertainty, daugman1988complete} demonstrated that simple cells in the mammalian visual cortex are modeled by modulations of a fixed 2-dimensional Gaussian. In other words, this is a Gabor system with a Gaussian window. We remark that modern neural networks also incorporate ideas that are not strictly biologically motivated. 
	
	\item
	The authors of \cite{lee2008sparse} observed that the learned filters (the experimentally ``optimal" filters) in Hinton's algorithm for learning \emph{deep belief networks} \cite{hinton2006fast} are localized, oriented, band-pass filters, which resemble Gabor functions. Strictly speaking, this set of functions is not a Gabor system since it is not derived from a single generating function, but it is not a wavelet system either.
	
	\item
	The use of Gabor frames for classification is not unprecedented, since the short-time Fourier transform with Gaussian window has been used as a feature extractor for various image classification problems \cite{hamamoto1998gabor, kong2003palmprint, arivazhagan2006texture}. These papers predated Mallat's work on scattering transforms and did not use Gabor functions in a multi-layer decomposition. To the best of our knowledge, we are not aware of any prior work that combines neural networks with Gabor functions. 
\end{enumerate} 

We address the situation where a Gabor frame is used for scattering. In Section \ref{section definitions}, we introduce a new class of frames, called \emph{uniform covering frames}, and this class is a natural generalization of certain types of Gabor frames. In fact, no wavelet frame is a uniform covering frame, so our situation is completely different from that in Mallat \cite{mallat2012group}. We combine uniform covering frames with neural networks to obtain the \emph{Fourier scattering transform} $\S_\F$ and the \emph{truncated Fourier scattering transform} $\S_\F[M,K]$, where the parameters $M$ and $K$ control the width and depth the network, respectively. 

In Section \ref{section FST}, we concentrate on the theoretical analysis of $\S_\F$. Since both $\S_\F$ and $\S_\W$ share the same network structure, it is natural to ask whether they share the same broad mathematical properties. We establish an exponential decay of energy estimate for $\S_\F$, Proposition \ref{prop decay}, and it is unclear whether $\S_\W$ satisfies this property. Theorem \ref{thm1} shows that $\S_\F$ conserves energy, is non-expansive, and contracts sufficiently small translations. It also shows that the transformation contracts sufficiently small diffeomorphisms assuming additional regularity on the input data.

In Section \ref{section truncation}, we establish analogous estimates for $\S_\F[M,K]$. The main difficulty is ensuring that the truncation has trivial kernel. We show that the largest coefficients of $\S_\F$ are concentrated in the frequency decreasing paths, Proposition \ref{prop path}, and it is unknown if $\S_\W$ satisfies this property. By using this quantitative control over the coefficients of $\S_\F$, we prove Theorem \ref{thm2}, which shows that $\S_\F[M,K]$ is an effective feature extractor for appropriate choices of the parameters $M$ and $K$.  

In Section \ref{section motivation}, we introduce the \emph{fast Fourier scattering transform} algorithm, which computes $\S_\F[M,K]$. We use this algorithm to compute the largest Fourier scattering coefficients of a model image. Our experiment demonstrates that, while the first-order coefficients identifies the edges in the image, the second-order ones extract global and subtle oscillatory features. We also qualitatively compare the features generated by $\S_\F$ and $\S_\W$. 

In Section \ref{section discussion}, we conclude this paper with a detailed comparison of our theoretical results with those from the literature. In particular, our proofs use covering arguments that have not been previously applied to the analysis of scattering transforms.

\section{Definitions}
\label{section definitions}

Mathematically, a feature extractor is an operator, $S\colon X\to Y$, where $X$ and $Y$ are metric spaces. We primarily work with the data space $X=L^2(\R^d)$, the space of Lebesgue measurable functions that are square integrable, which provides an accurate model for audio and image data. 

For several results, we require additional regularity on the input data. The Fourier transform of a Schwartz function $f$ is $\hat f(\xi)=\int_{\R^d} f(x)e^{-2\pi ix\cdot \xi}\ dx$, and this definition has a unique extension to $f\in L^2(\R^d)$ by density, see \cite{benedetto1996harmonic}. For $R>0$ and $x\in\R^d$, let $Q_R(x)=\{y\in\R^d\colon |y-x|_\infty <R\}$ be the open cube of size length $2R$ centered at $x$. We say that a function $f\in L^2$ is $(\epsilon,R)$ band-limited for some $\epsilon\in[0,1)$ and $R>0$, if $\|\hat f\|_{L^2(Q_R(0))}\geq (1-\epsilon)\|f\|_{L^2}$. Of course, if it is possible to find $R$ sufficiently large such that $\epsilon=0$, then $f$ is band-limited. This assumption is realistic, since it has been observed that natural images are essentially band-limited \cite{pennebaker1992jpeg}.

We primarily work with the feature space $Y = L^2(\R^d;\ell^2(\Z))$, the set of sequences $\{f_m\colon m\in\Z\}$ such that all $f_m\colon \R^d\to\C$ are Lebesgue measurable and 
\[
\|\{f_m\}\|_{L^2\ell^2}^2=\int_{\R^d}\sum_{m\in\Z} |f_m(x)|^2\ dx<\infty.
\]

In order to improve classification rates, an effective feature extractor $S$ contracts distances between points belonging to the same class, and expands distances between points belonging to different classes; feature extractors that trivially contract or expand all data points are ineffective. For this reason, we want $S$ to be bounded above and below. Otherwise, we can find sequences $\{f_n\},\{g_n\}\subset X$ of unit norm vectors such that $\|Sf_n\|_Y\to 0$ and $\|Sg_n\|_Y\to\infty$. 

Symmetries and invariants play an important role in feature extraction. For example, a small translation or perturbation of an image does not change its classification. More specifically, for $y\in \R^d$, let $|y|$ be its Euclidean norm and let $|y|_\infty$ be its sup norm. Let $T_y$ be the translation operator 
\[
T_yf(x)=f(x-y).
\]
Let $C^k(\R^d;\R^d)$ be the space of $k$-times continuous differentiable functions from $\R^d$ to $\R^d$ equipped with its usual norm $\|\cdot\|_{C^k}$. For $\tau\in C^1(\R^d;\R^d)$, let $T_\tau$ be the additive diffeomorphism 
\[
T_\tau f(x)=f(x-\tau(x)).
\]
In order to demonstrate that $S$ is an effective feature extractor, we would like to obtain finite upper bounds on $\|S(T_yf)-Sf\|_Y$ and $\|S(T_\tau f)-Sf\|_Y$ in terms of $|y|$, $\|\tau\|_{C^1}$, and $\|f\|_X$. See \cite{benedetto2010integration} for basic real analysis facts.

Mallat's windowed scattering transform \cite{mallat2012group} satisfies variants of these properties, and we carefully discuss his results in Section \ref{section discussion}. He constructed the windowed scattering transform by combining a specific wavelet frame with convolutional neural networks. Let $J$ be an integer and let $G$ be a finite group of rotations on $\R^d$ together with reflection about the origin. Consider the wavelet frame,
\[
\W=\{\varphi_{2^J}\}\cup \{\psi_{2^j,r}\colon j>-J,\ r\in G\},
\]
where $\varphi_{2^J}(x)=2^{-dJ}\varphi(2^{-J}x)$ is the wavelet corresponding to the coarsest scale $2^J$, and $\psi_{2^j,r}(x)=2^{dj}\psi(2^jr^{-1}x)$ is a detail wavelet of scale $2^{-j}$ and localization $r$. Here, we have followed Mallat's notation where the dilations of $\varphi$ and $\psi$ are inversely related. The index set of $\W$ is the countably infinite set
\[
\Lambda=\{(2^j,r)\colon j>-J,\ r\in G\}.
\]
The network structure is combined with the wavelet frame by creating a tree from the index set $\Lambda$ and associating each element of the tree with a corresponding operator. Indeed, let $\Lambda^0=\emptyset$, and for integers $k\geq 1$, let
\[
\Lambda^k=\underbrace{\Lambda\times\Lambda\times\cdots\times\Lambda}_{k-\text{times}}.
\]
Then, each $\lambda\in\Lambda^k$ is associated with the \emph{scattering propagator} $U[\lambda]$, formally defined as 
\[
U[\lambda]f
=\begin{cases}
\ f &\text{if } \lambda\in\Lambda^0, \\
\ |f*\psi_\lambda| &\text{if } \lambda\in\Lambda, \\
\ U[\lambda_k] U[\lambda_{k-1}] \cdots U[\lambda_1] f &\text{if } \lambda=(\lambda_1, \lambda_2, \dots,\lambda_k)\in\Lambda^k.
\end{cases}
\]
Strictly speaking, it does not make sense to write $\lambda\in\Lambda^0=\emptyset$, but we use this convention for convenience, see \cite{mallat2012group}. The \emph{windowed scattering transform} $\S_\W$ is formally defined as
\[
\S_\W(f)
=\{U[\lambda]f*\varphi_{2^J}\colon \lambda\in\Lambda^k,\ k=0,1,\dots\}. 
\]
We importantly mention that even though $\S_\W$ is defined using the wavelet transform, which is unitary, $\S_\W$ is not invertible due to the loss of the phase factor in each layer. This is disadvantageous for certain applications such as compression, but the empirical results \cite{bruna2013invariant, sifre2013rotation, hirn2015quantum} suggest that this property is advantageous for classification problems. 

Mallat's method for combining wavelets and neural networks is flexible, and we use his idea to combine neural networks with time-frequency representations called Gabor frames \cite{benedetto1993gabor}. The essential support of a Lebesgue measurable function $f$, denoted $\supp(f)$, is the complement of the largest open set where $f=0$ almost everywhere.

\begin{definition}
	Let $\P$ be a countably infinite index set. A \emph{uniform covering frame} is a sequence of functions,
	\[
	\F=\{f_0\}\cup\{f_p\colon p\in\P\},
	\]
	satisfying the following assumptions:
	\begin{enumerate}[(a)]\itemsep+.5em
		\item
		\emph{Assumptions on $f_0$ and $f_p$}. Let $f_0\in L^1(\R^d)\cap L^2(\R^d)\cap C^1(\R^d)$ such that $\hat{f_0}$ is supported in a neighborhood of the origin and $|\hat{f_0}(0)|=1$. For each $p\in\P$, let $f_p\in L^1(\R^d)\cap L^2(\R^d)$ such that $\supp(\hat{f_p})$ is compact and connected. 
		\item
		\emph{Uniform covering property}. For any $R>0$, there exists an integer $N>0$ such that for each $p\in\P$, the set $\supp(\hat{f_p})$ can be covered by $N$ cubes of side length $2R$.
		\item
		\emph{Frame condition}. Assume that for all $\xi\in\R^d$,
		\begin{equation}
		\label{eq frame} 
		|\hat{f_0}(\xi)|^2+\sum_{p\in\P} |\hat{f_p}(\xi)|^2=1.
		\end{equation}
		This implies $\F$ is a semi-discrete Parseval frame for $L^2(\R^d)$: For all $f\in L^2(\R^d)$,
		\[
		\|f*f_0\|_{L^2}^2+\sum_{p\in\P} \|f*f_p\|_{L^2}^2 = \|f\|_{L^2}^2.
		\]
	\end{enumerate}
\end{definition}

\begin{remark}
	\label{remark cover}
	The uniform covering property is the key ingredient to our results, and we have several comments on this assumption. 
	
	\begin{enumerate}[(a)]\itemsep+.5em
		\item
		The uniform covering property is, both, a size and a shape constraint on the sets $\{\supp(\hat{f_p})\colon p\in\P\}$. It is a size constraint because it implies $\sup_{p\in\P}|\supp(\hat{f_p})|<\infty$, where $|S|$ denotes the Lebesgue measure of the set $S$. The uniform covering property is also a shape constraint because the number of cubes of a fixed side length required to cover the unit cube is much less than the number required to cover an elongated rectangular prism of unit volume.
		
		\item		
		Since a wavelet frame is partially generated by dilations of a single function, the support of each wavelet varies according to the dilation. Hence, no wavelet frame can satisfy the uniform covering property, and in turn, no wavelet frame can be a uniform covering frame. Examples of wavelet frames include standard wavelets \cite{daubechies1992ten, mallat1999wavelet}, curvelets \cite{candes2004new}, shearlets \cite{guo2006sparse, guo2007optimally,czaja2014anisotropic}, composite wavelets \cite{guo2006wavelets}, $\alpha$-molecules \cite{grohs2015alpha}, and Mallat's scattering wavelets \cite{mallat2012group}.  
		
		\item
		The assumption that $\supp(\hat{f_p})$ is connected is only used to prove Proposition \ref{prop tiling} and the results in Section \ref{section truncation}, when we truncate $\S_\F$. The proposition provides a natural way of thinking about the index set $\P$. The connectedness assumption is used to preclude certain pathological behavior such as $\supp(\hat{f_p})$ having two connected components, where one component is near the origin and the other is far from the origin. 
	\end{enumerate}
\end{remark}

We now explain why uniform covering frames are similar to Gabor frames. A Gabor frame covers the frequency space uniformly by translating a fixed set, while a uniform covering frame covers the frequency domain by sets of approximately equal size and shape. Hence, a uniform covering frame is similar to the time-frequency approach. In contrast to these approaches, a wavelet frame covers the frequency space non-uniformly by dilating a fixed set. 

Given their similarities, it is not surprising that a variety of Gabor frames are uniform covering frames, as shown in the following proposition. It is possible to construct relevant and useful non-Gabor uniform covering frames; in the companion paper \cite{czaja2016uniform}, we construct uniform covering frames that are partially generated by rotations to obtain a rotationally invariant operator, which we call the \emph{rotational Fourier scattering transform}. These are related to previously constructed directional time-frequency representations, see \cite{grafakos2008gabor, czaja2016discrete}.

\begin{proposition}
	\label{prop gabor}
	Let $g\in L^1(\R^d)\cap L^2(\R^d)\cap C^1(\R^d)$ be such that $\supp(\hat g)$ is compact and connected, $|\hat g(0)|=1$, and $\sum_{m\in\Z^d} |\hat g(\xi-m)|^2=1$ for all $\xi\in\R^d$. Let $A\colon \R^d\to\R^d$ be an invertible linear transformation, $f_0(x)=|\det A|\ g(Ax)$, $\P=A^t\Z^d\setminus\{0\}$, and $f_p(x)=e^{2\pi ip\cdot x}f_0(x)$ for each $p\in\P$. Then, $\F=\{f_0\}\cup\{f_p\colon p\in\P\}$ is a Gabor frame, as well as a uniform covering frame. 
\end{proposition}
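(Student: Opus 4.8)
The plan is to transfer everything to the Fourier side and verify the three defining properties of a uniform covering frame directly, using the standard behavior of the Fourier transform under dilation and modulation. Writing $B = A^t$ for brevity, the dilation $f_0(x) = |\det A|\, g(Ax)$ has Fourier transform $\hat{f_0}(\xi) = \hat g(B^{-1}\xi)$, since the factor $|\det A|$ exactly cancels the Jacobian from the change of variables. The modulation $f_p(x) = e^{2\pi i p\cdot x} f_0(x)$ produces a frequency shift, $\hat{f_p}(\xi) = \hat{f_0}(\xi - p) = \hat g(B^{-1}(\xi - p))$. These two identities are the only computations needed, and every subsequent claim follows by inspection.

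First I would dispatch the structural and regularity requirements. The system $\F = \{M_p f_0 : p \in B\Z^d\}$ (with $M_p$ denoting modulation by $p$, and $p = 0$ giving $f_0$) is by definition a set of modulations of the single window $f_0$ over the lattice $B\Z^d = A^t\Z^d$, so it is a semi-discrete Gabor system; once the frame condition below is checked it is a Gabor frame. For condition (a), $f_0 \in L^1 \cap L^2 \cap C^1$ follows from the same memberships for $g$ together with the linear change of variables, and $|\hat{f_0}(0)| = |\hat g(0)| = 1$. Since $\hat g$ is continuous (as $g \in L^1$) and $\hat g(0) \neq 0$, the compact connected set $\supp(\hat g)$ contains a neighborhood of the origin, hence so does $\supp(\hat{f_0}) = B\,\supp(\hat g)$. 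Because modulation preserves modulus, $|f_p| = |f_0|$ gives $f_p \in L^1 \cap L^2$, and $\supp(\hat{f_p}) = \supp(\hat{f_0}) + p$ is a translate of a compact connected set, hence compact and connected.

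The covering property (b) is where the Gabor (lattice-translate) structure is essential, and it is nearly immediate: every set $\supp(\hat{f_p}) = \supp(\hat{f_0}) + p$ is a translate of the single compact set $K := B\,\supp(\hat g)$. Fixing $R > 0$, compactness of $K$ yields a finite number $N$ of cubes of side length $2R$ covering $K$, and translating that cover shows the same $N$ suffices for every $p$ (including $p = 0$). This is exactly the point at which a wavelet frame would fail, so although the computation is short it is the conceptual heart of the statement.

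The crux is the frame condition (c). Substituting the two Fourier identities and using $\P \cup \{0\} = B\Z^d$ gives
\[
|\hat{f_0}(\xi)|^2 + \sum_{p\in\P} |\hat{f_p}(\xi)|^2 = \sum_{p\in B\Z^d} |\hat g(B^{-1}(\xi - p))|^2.
\]
Writing $p = Bm$ with $m \in \Z^d$ and setting $\eta = B^{-1}\xi$, each summand becomes $|\hat g(\eta - m)|^2$, so the sum equals $\sum_{m\in\Z^d}|\hat g(\eta - m)|^2$, which is $1$ by the hypothesis on $g$. Hence (c) holds for every $\xi$, and the stated Parseval identity is its immediate semi-discrete consequence. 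I expect the only place requiring genuine care is bookkeeping the change of variables $\eta = B^{-1}\xi$ together with the identification $\P \cup \{0\} = A^t\Z^d$; once these are aligned, the integer-lattice partition of unity assumed for $g$ transfers verbatim to $\F$.
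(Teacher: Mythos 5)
Your proof is correct and follows essentially the same route as the paper's: the uniform covering property is obtained by noting each $\supp(\hat{f_p})$ is a translate of the single compact connected set $\supp(\hat{f_0})$, and the frame condition follows from $\hat{f_p}(\xi)=\hat g(A^{-t}(\xi-p))$ together with the reindexing $p=A^t m$, which turns the sum into the hypothesized partition of unity $\sum_{m\in\Z^d}|\hat g(\eta-m)|^2=1$. The only difference is that you spell out the routine structural verifications (membership in $L^1\cap L^2\cap C^1$, compactness and connectedness of the supports) that the paper leaves implicit.
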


\begin{proof}
	Since $\supp(\hat{f_p})$ is a translation of the connected and compact set $\supp(\hat{f_0})$, the uniform covering property automatically holds. Let $A^{-t}=(A^{-1})^t$. For all $\xi\in\R^d$, 
	\begin{align*}
	|\hat{f_0}(\xi)|^2+\sum_{p\in\P} |\hat{f_p}(\xi)|^2
	&=|\hat g(A^{-t}\xi)|^2+\sum_{p\in\P} |\hat g(A^{-t}(\xi-p))|^2 \\
	&=\sum_{m\in\Z^d} |\hat g(A^{-t}\xi-m)|^2
	=1.
	\end{align*}
\end{proof}

Figure \ref{fig 2} illustrates the differences between Mallat's wavelet frame $\W$ \cite{mallat2012group}, and the Gabor frame that we just presented.

\begin{figure}[h]
	\begin{center}
	\begin{tikzpicture}[scale=.3]
	\fill [fill,shading=radial,color=gray] (5.6,5.6) circle [radius=4];
	\draw 
	(0,0) circle [radius=2]
	(0,0) circle [radius=4]
	(0,0) circle [radius=8]
	(-10,0)--(10,0)
	(0,10)--(0,-10)
	(7.1,7.1)--(-7.1,-7.1)
	(-7.1,7.1)--(7.1,-7.1);
	\draw [fill] 
	(2,0) circle [radius=.2]
	(-2,0) circle [radius=.2]
	(0,2) circle [radius=.2]
	(0,-2) circle [radius=.2]
	(1.4,1.4) circle [radius=.2]
	(-1.4,-1.4) circle [radius=.2]
	(-1.4,1.4) circle [radius=.2]
	(1.4,-1.4) circle [radius=.2]
	(4,0) circle [radius=.2]
	(-4,0) circle [radius=.2]
	(0,4) circle [radius=.2]
	(0,-4) circle [radius=.2]
	(2.8,2.8) circle [radius=.2]
	(-2.8,-2.8) circle [radius=.2]
	(-2.8,2.8) circle [radius=.2]
	(2.8,-2.8) circle [radius=.2]
	(8,0) circle [radius=.2]
	(-8,0) circle [radius=.2]
	(0,8) circle [radius=.2]
	(0,-8) circle [radius=.2]
	(5.6,5.6) circle [radius=.2]
	(-5.6,-5.6) circle [radius=.2]
	(-5.6,5.6) circle [radius=.2]
	(5.6,-5.6) circle [radius=.2];
	\end{tikzpicture} 
	\begin{tikzpicture}[scale=0.6]
	\fill [fill,color=black!15!white] (1.2,1.2) rectangle (2.8,2.8);
	\draw
	(-1,-1) rectangle (1,1)
	(-2,-2) rectangle (2,2)
	(-3,-3) rectangle (3,3);
	\draw
	(-5,0)--(5,0)
	(0,5)--(0,-5);
	\foreach \x in {-3,...,3}
	\draw[fill] (\x,-3) circle [radius=0.1];
	\foreach \x in {-3,...,3}
	\draw[fill] (\x,-2) circle [radius=0.1];
	\foreach \x in {-3,...,3}
	\draw[fill] (\x,-1) circle [radius=0.1];
	\foreach \x in {-3,...,-1}
	\draw[fill] (\x,0) circle [radius=0.1];
	\foreach \x in {1,...,3}
	\draw[fill] (\x,0) circle [radius=0.1];
	\foreach \x in {-3,...,3}
	\draw[fill] (\x,1) circle [radius=0.1];
	\foreach \x in {-3,...,3}
	\draw[fill] (\x,2) circle [radius=0.1];
	\foreach \x in {-3,...,3}
	\draw[fill] (\x,3) circle [radius=0.1];
	\end{tikzpicture}
	\end{center}
	\caption{Left: Let $G$ be the group of rotations by angle $2\pi/8$. The black dots are elements of $\Lambda$, for the first three dyadic scales. The shaded gray region is the effective support of $(\psi_{2^{-J+3},2\pi/8})^\wedge$. Right: Let $A$ be the identity transformation. The black dots are elements of $\P$ for the first three uniform Fourier scales. The shaded region is the support of $(f_{2,2})^\wedge$.}
	
	\label{fig 2}
\end{figure}
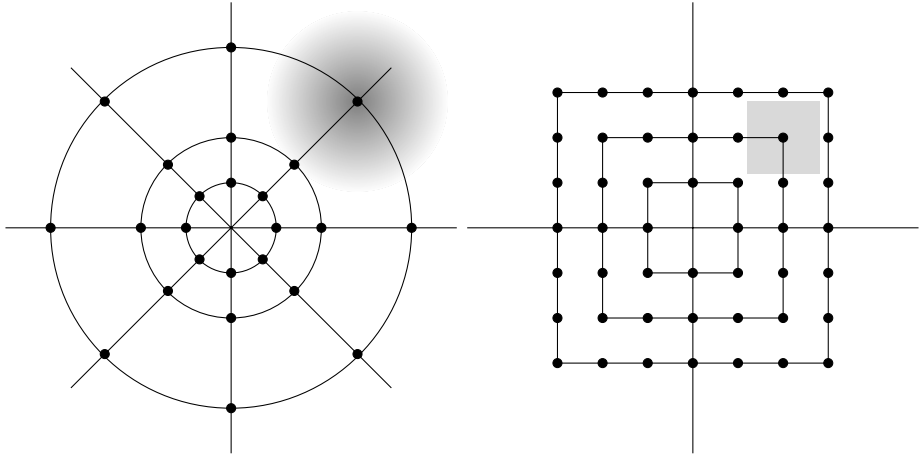

Having established the existence of a large class of uniform covering frames, we return our attention to incorporating the network structure. Slightly abusing notation, we associate $p\in\P^k$ with the scattering propagator $U[p]$, defined as 
\[
U[p]f
=\begin{cases}
\ f &\text{if } p\in\P^0, \\
\ |f*f_p| &\text{if } p\in\P, \\
\ U[p_k] U[p_{k-1}] \cdots U[p_1] f &\text{if } p=(p_1,p_2,\dots,p_k)\in\P^k.
\end{cases}
\]

\begin{definition}
	The \emph{Fourier scattering transform}, $\S_\F$, is the vector-valued operator
	\[
	\S_\F(f)=\{U[p]f*f_0\colon p\in\P^k,\ k=0,1,\dots\}.
	\]
\end{definition}
Since uniform covering frames decompose the frequency plane into approximately equal subsets, we believe ``Fourier" is an appropriate description of this operator. 

Both $\S_\F$ and $\S_\W$ correspond to neural networks of infinite width and depth, but when used in practice, the networks must be truncated. To truncate $\S_\F$, we keep terms up to a certain depth $K$ and terms belonging to appropriate finite subsets of $\P^k$, for $k=1,2,\dots,K$. In order to define these sets, we first prove the following proposition, which relates the index $p\in\P$ with the location of $\supp(\hat{f_p})$. Observe that (\ref{eq frame}) is a partition of unity statement, but it does not provide any information on how the partitioning is structured. Not surprisingly, the partition of unity has to be done in a ``uniform" way due to the uniform covering property. 

\begin{proposition}
	\label{prop tiling} 
	Let $\F=\{f_0\}\cup\{f_p\colon p\in\P\}$ be a uniform covering frame. There exist a constant $C_1>0$ and subsets $\{\P[m]\subset\P\colon m\geq 1\}$ such that for all integers $m\geq 1$, 
	\begin{equation}
	\label{eq tiling}
	|\hat{f_0}(\xi)|^2+\sum_{p\in\P[m]} |\hat{f_p}(\xi)|^2 =
	\begin{cases}
	\ 1 &\text{if } \xi\in \overline{Q_{C_1m}(0)}, \\
	\ 0 &\text{if } \xi\not\in Q_{C_1(m+1)}(0).
	\end{cases}
	\end{equation}	
	
\end{proposition}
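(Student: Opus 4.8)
The plan is to define each index set $\P[m]$ purely geometrically, by collecting those $p$ whose frequency support $\supp(\hat{f_p})$ meets a large centered cube, and then to read off the two cases of (\ref{eq tiling}) directly from the frame identity (\ref{eq frame}). Everything reduces to one genuinely non-formal point: converting the uniform covering property into a uniform bound on the \emph{diameter} (not merely the measure) of the supports $\supp(\hat{f_p})$. This is exactly where the connectedness hypothesis flagged in Remark \ref{remark cover}(c) is indispensable, and it is the main obstacle; the rest is bookkeeping with the partition of unity. Throughout I will use that each $\hat{f_p}$ and $\hat{f_0}$ is continuous, since $f_p,f_0\in L^1(\R^d)$, so that $\hat{f_p}(\xi)=0$ whenever $\xi\notin\supp(\hat{f_p})$ and (\ref{eq tiling}) may be checked pointwise.

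\textbf{Step 1 (uniform diameter bound).} Fix $R=1$ and let $N$ be the integer supplied by the uniform covering property, so each $\supp(\hat{f_p})$ is covered by $N$ cubes of side length $2$. I claim $\diam_\infty(\supp(\hat{f_p}))\le 2N=:D$ for every $p$, where $\diam_\infty$ denotes the diameter in $|\cdot|_\infty$. After discarding any covering cube disjoint from $\supp(\hat{f_p})$, form the graph on the surviving cubes with an edge between any two that intersect. Connectedness of $\supp(\hat{f_p})$ forces this graph to be connected, since a disconnection would split the cubes into two families whose unions are disjoint open sets separating $\supp(\hat{f_p})$. Hence any two points of $\supp(\hat{f_p})$ lie in cubes joined by a chain of at most $N$ successively overlapping cubes; inserting a point of each overlap and applying the triangle inequality (each cube has $|\cdot|_\infty$-diameter $2$) bounds their distance by $2N$. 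Without connectedness the support could be spread across widely separated cubes, so this step is precisely the one that cannot be dispensed with.

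\textbf{Step 2 (construction and verification).} Since $\supp(\hat{f_0})$ is bounded, choose $r_0$ with $\supp(\hat{f_0})\subset Q_{r_0}(0)$ and set $C_1:=\max\{D,r_0\}+1$, so that $C_1>D$ and $\supp(\hat{f_0})\subset Q_{C_1}(0)$. Define
\[
\P[m]:=\{p\in\P\colon \supp(\hat{f_p})\cap\overline{Q_{C_1 m}(0)}\neq\emptyset\}.
\]
If $\xi\in\overline{Q_{C_1 m}(0)}$, then every $p\notin\P[m]$ satisfies $\xi\notin\supp(\hat{f_p})$, hence $\hat{f_p}(\xi)=0$, so the sum in (\ref{eq tiling}) agrees with the full frame sum (\ref{eq frame}) and equals $1$. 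If instead $\xi\notin Q_{C_1(m+1)}(0)$, then $\hat{f_0}(\xi)=0$ because $\supp(\hat{f_0})\subset Q_{C_1}(0)\subset Q_{C_1(m+1)}(0)$ for $m\ge 1$; moreover each $p\in\P[m]$ has $\supp(\hat{f_p})$ meeting $\overline{Q_{C_1 m}(0)}$ with $\diam_\infty(\supp(\hat{f_p}))\le D<C_1$, so $\supp(\hat{f_p})\subset\overline{Q_{C_1 m+D}(0)}\subset Q_{C_1(m+1)}(0)$, whence $\hat{f_p}(\xi)=0$ and the sum vanishes. These are exactly the two cases, and the decisive mechanism is the choice $C_1>D$, which guarantees that any support touching $\overline{Q_{C_1 m}(0)}$ is trapped inside $Q_{C_1(m+1)}(0)$.
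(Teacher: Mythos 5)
Your proof is correct and follows essentially the same route as the paper's: both turn the uniform covering property plus connectedness of $\supp(\hat{f_p})$ into a uniform diameter bound (the paper simply asserts the bound $2NR$; you supply the explicit chaining argument), and both then define $\P[m]$ geometrically so that the two cases of (\ref{eq tiling}) follow pointwise from the frame identity (\ref{eq frame}). The only cosmetic difference is that you define $\P[m]$ as the set of $p$ whose support \emph{meets} $\overline{Q_{C_1m}(0)}$, which makes the value-$1$ case immediate and puts the diameter bound to work in the value-$0$ case, whereas the paper defines $\P[m]$ by \emph{containment} in $\overline{Q_{C_1(m+1)}(0)}$ and runs the mirror-image argument (value-$0$ case immediate, value-$1$ case by contradiction).
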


\begin{proof}
	For any set $S\subset\R^d$, let $\diam(S)=\sup_{x,y\in S}|x-y|$ be the diameter of $S$. Define
	\[
	C_1=\max\(\diam(\supp(\hat{f_0})),\ \sup_{p\in\P}\diam(\supp(\hat{f_p}))\).
	\]
	Note that $C_1$ is finite because of the uniform covering property. Indeed, the diameter of $\supp(\hat{f_0})$ is finite since $\hat{f_0}$ is supported in a compact set containing the origin. Fix $R>0$, and by assumption, the closed and connected set $\supp(\hat{f_p})$ can be covered by $N$ cubes of side length $2R$. Then, the diameter of $\supp(\hat{f_p})$ is bounded by $2NR$. 
	
	For integers $m\geq 1$, we define 
	\[
	\P[m]=\{p\in\P\colon \supp(\hat{f_p})\subset \overline{Q_{C_1(m+1)}(0)}\}. 
	\]
	By definition, of $\P[m]$, we have 
	\[
	|\hat{f_0}(\xi)|^2+\sum_{p\in\P[m]} |\hat{f_p}(\xi)|^2 = 0
	\quad \text{if}\quad \xi\not\in Q_{C_1(m+1)}(0).
	\]
	To complete the proof, we prove (\ref{eq tiling}) by contradiction. Suppose there exists $\xi_0\in Q_{C_1m}(0)$ such that  
	\[
	|\hat{f_0}(\xi_0)|^2+\sum_{p\in\P[m]} |\hat{f_p}(\xi_0)|^2 <1. 
	\]
	By the frame condition (\ref{eq frame}), there exists $q\in\P$ such that $|\hat{f_q}(\xi_0)|>0$. Then, $\xi_0\in \supp(\hat{f_q})$ and by definition of $C_1>0$, we have  
	\begin{equation}
	\label{eq sub}
	\supp(\hat{f_q})\subset \overline{Q_{C_1}(\xi_0)}\subset \overline{Q_{C_1(m+1)}(0)}.
	\end{equation}
	For an illustration of this inclusion, see Figure \ref{fig 1}. This shows that $q\in\P[m]$, which contradicts the definition of $\P[m]$.
	
	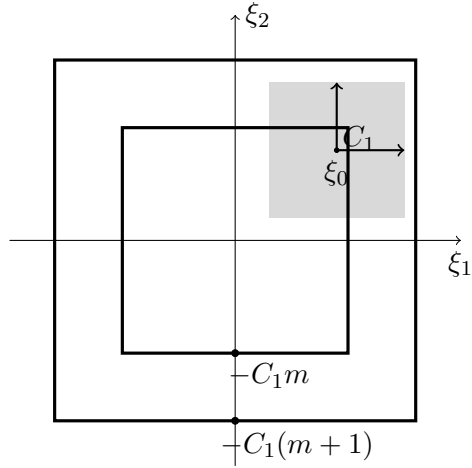
\begin{figure}[h]
		\begin{center}
		\begin{tikzpicture}[scale=0.3]
		\fill [fill,color=black!15!white] (1.5,1) rectangle (7.5,7);
		\draw[fill]
		(0,-5) circle [radius=.15]
		(0,-8) circle [radius=.15]
		(4.5,4) circle [radius=.1];
		\draw[very thick]
		(-5,-5) rectangle (5,5);
		\draw[very thick]
		(-8,-8) rectangle (8,8);
		\draw[->] 
		(-10,0)--(10,0);
		\draw[->]
		(0,-10)--(0,10);
		\draw
		(4.5,4) node[below]{$\xi_0$}
		(1.5,-5) node[below]{$-C_1m$}
		(2.75,-8) node[below]{$-C_1(m+1)$}
		(5.5,5.5) node[below]{$C_1$}
		(10,0) node[below]{$\xi_1$}
		(0,10) node[right]{$\xi_2$};
		\draw[->,thick]
		(4.5,4) -- (7.5,4);
		\draw[->,thick]
		(4.5,4) -- (4.5,7);
		\end{tikzpicture}
		\end{center}

		\caption{An illustration of the inclusions (\ref{eq sub}).}
		\label{fig 1}
	\end{figure} 
\end{proof}

\begin{remark}
	\label{remark sets}
	Suppose $\F$ is a Gabor frame satisfying Proposition \ref{prop gabor} for $A=aI$, where $I$ is the identity transformation on $\R^d$ and $a>0$. By definition, we have $\P=a\Z^d\setminus\{0\}$. We can determine the family of sets $\{\P[m]\colon m\geq 1\}$ satisfying Proposition \ref{prop tiling}. Let $C_1=a$ and
	\[
	\P[m]=\{p\in\P\colon |p|_\infty\leq m\}. 
	\]
	For all integers $m\geq 1$, we have
	\[
	|\hat{f_0}(\xi)|^2+\sum_{p\in\P[m]} |\hat{f_p}(\xi)|^2 =
	\begin{cases}
	\ 1 &\text{if } \xi\in \overline{Q_{am}(0)}, \\
	\ 0 &\text{if } \xi\not\in Q_{a(m+1)}(0).
	\end{cases}
	\]
\end{remark}

From here onwards, let $C_1>0$ be the smallest constant such that Proposition \ref{prop tiling} holds, and let $\{\P[m]\colon m\geq 1\}$ be the family of sets defined in the proposition. Similar to before, we create a tree from this collection of sets. For integers $M,K\geq 1$, we define the discrete set
\begin{equation}
\label{trunP}
\P[M]^K
=\underbrace{\P[M] \times \P[M] \times\cdots\times \P[M]}_{K-\text{times}}
\subset\P^K. 
\end{equation}
Again, we use the convention that $\P[M]^0=\emptyset$. 

\begin{definition}
	The \emph{truncated Fourier scattering transform}, $\S_\F[M,K]\colon L^2(\R^d)\to L^2(\R^d;\ell^2(\Z))$, is defined as
	\begin{equation}
	\label{trunS}
	\S_\F[M,K](f)
	=\{U[p]f *f_0\colon p\in \P[M]^k,\ k=0,1,\dots,K\}. 
	\end{equation}
\end{definition}

\section{Fourier scattering transform}
\label{section FST}

Since $\S_\W$ and $\S_\F$ have the same network structure, it is natural to ask whether $\S_\F$ satisfies the same broad mathematical properties. This is not immediately clear because wavelets and Gabor functions are qualitatively and mathematically different, see \cite{daubechies1990wavelet} for a comparison and discussion on applications. Despite their differences, we prove that $\S_\F$ satisfy all the same properties of $\S_\W$. However, our proof techniques are very different from those of Mallat's. For example, he used scaling and almost orthogonality arguments to exploit the dyadic structure of wavelets, while we use covering and tiling arguments to take advantage of the uniform covering property.

In order to show that $\S_\W$ and $\S_\F$ share the same properties, we need several preliminary results. For all $k\geq 0$ and $p\in\P^k$, it immediately follows from the frame property (\ref{eq frame}) that $U[p]\colon L^2(\R^d)\to L^2(\R^d)$ is bounded with operator norm satisfying $\|U[p]\|_{L^2\to L^2}\leq 1$. The following proposition contains some additional results that follow from the frame property as well. Mallat proved these for his wavelet frame $\W$ in \cite{mallat2012group}, but the arguments only rely on the frame identity (\ref{eq frame}), so we omit their proofs. 

\begin{proposition}
	\label{prop frame} 
	Let $\F=\{f_0\}\cup \{f_p\colon p\in\P\}$ be a uniform covering frame. For $f,g\in L^2(\R^d)$ and integers $K\geq 0$, we have
	\begin{equation}
	\label{eq4}
	\sum_{p\in\P^{K+1}}\|U[p]f\|_{L^2}^2 + \sum_{k=0}^K\sum_{p\in\P^k}\|U[p]f*f_0\|_{L^2}^2
	=\|f\|_{L^2}^2,
	\end{equation}
	and
	\begin{equation}
	\label{eq5}
	\sum_{k=0}^K\sum_{p\in\P^k} \big\|U[p]f*f_0-U[p]g*f_0 \big\|_{L^2}^2 
	\leq \|f-g\|_{L^2}^2.
	\end{equation}	
\end{proposition}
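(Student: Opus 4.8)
The plan is to derive both identities from a single one-step estimate supplied by the frame condition (\ref{eq frame}), and then iterate it through the layers of the network by a telescoping argument. The only facts about the complex modulus that I need are scalar and elementary: it preserves the $L^2$ norm, $\||h|\|_{L^2}=\|h\|_{L^2}$, and it is pointwise $1$-Lipschitz, $||a|-|b||\le|a-b|$. The first drives (\ref{eq4}) and the second drives (\ref{eq5}). Throughout, every term in sight is non-negative, so Tonelli's theorem for sums justifies all interchanges and reindexings of the countable sums, and the already-noted bound $\|U[p]\|_{L^2\to L^2}\le 1$ guarantees $U[p]f\in L^2$, so that the frame identity may legitimately be applied to it.

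For (\ref{eq4}), I would first record the one-step identity: for any $h\in L^2(\R^d)$, Plancherel together with (\ref{eq frame}) gives the semi-discrete Parseval identity, and since $U[q]h=|h*f_q|$ shares the $L^2$ norm of $h*f_q$, this reads
\[
\|h*f_0\|_{L^2}^2+\sum_{q\in\P}\|U[q]h\|_{L^2}^2=\|h\|_{L^2}^2.
\]
Applying this with $h=U[p]f$ for fixed $p\in\P^k$, using $U[q]U[p]f=U[(p,q)]f$, and summing over $p\in\P^k$, the middle term reindexes via the bijection $\P^k\times\P\cong\P^{k+1}$ into a sum over $\P^{k+1}$. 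Writing $E_k=\sum_{p\in\P^k}\|U[p]f\|_{L^2}^2$ and $A_k=\sum_{p\in\P^k}\|U[p]f*f_0\|_{L^2}^2$, this yields the recursion $A_k+E_{k+1}=E_k$, with $E_0=\|f\|_{L^2}^2$ coming from the convention $U[p]f=f$ on $\P^0$. Telescoping from $k=0$ to $K$ gives $E_{K+1}+\sum_{k=0}^K A_k=\|f\|_{L^2}^2$, which is exactly (\ref{eq4}).

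For (\ref{eq5}), I would prove the contractive analogue of that one-step identity: applying the $1$-Lipschitz bound pointwise and integrating gives $\||u*f_q|-|v*f_q|\|_{L^2}\le\|(u-v)*f_q\|_{L^2}$ for each $q\in\P$, while the $f_0$ channel is linear, $\|u*f_0-v*f_0\|_{L^2}=\|(u-v)*f_0\|_{L^2}$; combining these with the Parseval identity for $u-v$ gives, for all $u,v\in L^2(\R^d)$,
\[
\|u*f_0-v*f_0\|_{L^2}^2+\sum_{q\in\P}\|U[q]u-U[q]v\|_{L^2}^2\le\|u-v\|_{L^2}^2.
\]
Setting $u=U[p]f$, $v=U[p]g$, summing over $p\in\P^k$, and reindexing over $\P^{k+1}$ as before, I obtain $\tilde A_k\le\tilde E_k-\tilde E_{k+1}$, where $\tilde E_k=\sum_{p\in\P^k}\|U[p]f-U[p]g\|_{L^2}^2$ and $\tilde A_k=\sum_{p\in\P^k}\|U[p]f*f_0-U[p]g*f_0\|_{L^2}^2$. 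Summing from $k=0$ to $K$ and dropping the non-negative remainder $\tilde E_{K+1}$ leaves $\sum_{k=0}^K\tilde A_k\le\tilde E_0=\|f-g\|_{L^2}^2$, which is (\ref{eq5}).

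I do not anticipate a genuine obstacle, since both statements collapse to the frame identity plus the two elementary modulus inequalities. The only points demanding care are bookkeeping: establishing the reindexing bijection between $\P^k\times\P$ and $\P^{k+1}$, invoking non-negativity and Tonelli to rearrange the infinite sums freely, and pinning down the boundary values $E_0=\|f\|_{L^2}^2$ and $\tilde E_0=\|f-g\|_{L^2}^2$ from the empty-path convention. This uniform telescoping structure is exactly what is meant by the remark that the arguments ``only rely on the frame identity.''
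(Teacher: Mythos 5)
Your proof is correct, and it is precisely the standard argument the paper points to when it omits the proof and cites Mallat: the frame identity applied layerwise, the norm-preservation and $1$-Lipschitz properties of the modulus, and a telescoping sum over depths. Nothing further is needed; the finiteness of each $E_k$ and $\tilde E_k$ follows inductively from $E_0=\|f\|_{L^2}^2$ and $\tilde E_0=\|f-g\|_{L^2}^2$, as you implicitly use.
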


The first identity of Proposition \ref{prop frame} implies that $\S_\F\colon L^2(\R^d)\to L^2(\R^d;\ell^2(\Z))$ is bounded with operator norm satisfying $\|\S_\F\|_{L^2\to L^2\ell^2}\leq 1$. Indeed, we have
\[
\|\S_\F (f)\|_{L^2\ell^2}^2
=\lim_{K\to\infty} \sum_{k=0}^K\sum_{p\in\P^k}\|U[p]f*f_0\|_{L^2}^2
\leq \|f\|_{L^2}^2.
\]

The following proposition is a basic result on positive definite functions. For any integer $k\geq 1$ and dimension $d$, Wendland \cite{wendland1995piecewise} constructed a compactly supported, radial, and positive-definite $C^{2k}(\R^d)$ function. These functions are essentially anti-derivatives of positive-definite polynomial splines. We provide a crude estimate since we do not need all of their smoothness. 

\begin{proposition}
	\label{prop phi}
	There exists a non-negative function $\phi\colon\R^d\to\R$, such that $\hat\phi$ is continuous, decreasing along each Euclidean coordinate, and $\supp(\hat\phi)=\overline{Q_1(0)}$.
\end{proposition}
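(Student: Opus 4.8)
The plan is to build $\hat\phi$ as a tensor product of one-dimensional bumps, reducing the entire statement to a single scalar construction, and to guarantee the non-negativity of $\phi$ through positive definiteness. First I would isolate the one-dimensional problem: produce a continuous, even function $\Phi\colon\R\to[0,\infty)$ with $\supp(\Phi)=[-1,1]$ that is decreasing on $[0,1]$ and, crucially, is \emph{positive-definite}, meaning that its inverse Fourier transform is pointwise non-negative. The cleanest explicit choice is the tent function $\Phi(t)=(1-|t|)_+$. Since $\Phi$ is the autocorrelation of $\mathbf{1}_{[-1/2,1/2]}$, its inverse transform is the Fejér kernel $t\mapsto(\sin(\pi t)/(\pi t))^2$, which is manifestly non-negative; this is exactly the positive definiteness I need. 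Wendland's compactly supported positive-definite functions \cite{wendland1995piecewise} furnish smoother scalar bumps with the same support and monotonicity, should extra regularity ever be wanted, but since the statement asks only for continuity of $\hat\phi$, the tent suffices.

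Next I would tensorize. Define $\hat\phi(\xi)=\prod_{j=1}^d\Phi(\xi_j)$ and let $\phi$ be its inverse Fourier transform, which factors as $\phi(x)=\prod_{j=1}^d\check\Phi(x_j)$. Because $\check\Phi$ is the Fejér kernel, it lies in $L^1(\R)$, so $\phi\in L^1(\R^d)$ and the Fourier manipulations are classically justified, with $\hat\phi$ recovered as the product of tents. Then $\phi\geq 0$ since each factor $\check\Phi$ is non-negative, which is precisely where positive definiteness enters. Continuity of $\hat\phi$ is inherited from that of each $\Phi$; the support identity $\supp(\hat\phi)=\overline{Q_1(0)}$ holds because each factor is supported on $[-1,1]$ and strictly positive on $(-1,1)$, so the product is supported on $[-1,1]^d$ and vanishes on no open subset of it. Finally, fixing all but the $j$-th coordinate leaves $\hat\phi$ equal to a non-negative constant times $\Phi(\xi_j)$, which is decreasing in $|\xi_j|$, giving the ``decreasing along each Euclidean coordinate'' property.

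The one genuine point requiring care is the non-negativity of $\phi$: monotonicity, continuity, and the exact support of $\hat\phi$ are easy to arrange for many bumps, but an arbitrary such bump need not have a non-negative inverse transform. The resolution is to insist that the scalar building block be positive-definite; positive definiteness is preserved under tensor products, equivalently the inverse transform factorizes into non-negative factors, so the property propagates to $\hat\phi$ and delivers $\phi\geq 0$. Relying on the explicit Fejér computation avoids invoking Bochner's theorem and keeps the argument self-contained.
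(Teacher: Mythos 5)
Your proposal is correct and follows essentially the same route as the paper: both construct $\hat\phi$ as a tensor product of one-dimensional tent functions $(1-|\xi_j|)_+$ and obtain non-negativity of $\phi$ from the positive definiteness of the tent. The only difference is cosmetic: the paper cites Wendland \cite{wendland1995piecewise} for the positive definiteness of the univariate tent, while you verify it directly by identifying its inverse transform as the Fej\'er kernel $(\sin(\pi t)/(\pi t))^2$, which makes the argument self-contained.
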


\begin{proof} 
	For $j=1,2,\dots, d$, let $\phi_j\colon\R\to\R$ be defined by its one-dimensional Fourier transform, 
	\[
	\hat{\phi_j}(\xi_j)=(1-|\xi_j|) \mathds{1}_{[0,1]}(|\xi_j|).
	\]
	Here, $\mathds{1}_S$ is the characteristic function of the set $S$ and for a positive number $x$, $\lfloor x\rfloor$ stands for the integer $n$ satisfying $n\leq x<n+1$. Note that each $\phi_j$ is non-negative because $\hat{\phi_j}$ is a univariate positive-definite function, see \cite{wendland1995piecewise}.	Then, let $\phi\colon\R^d\to\R$ be the function,
	\[
	\phi(x)=\phi_1(x_1)\phi_2(x_2)\cdots\phi_d(x_d).
	\] 
	By construction, $\phi$ is satisfies the desired properties.  
\end{proof}

The following proposition is the crucial exponential decay of energy estimate, and from here onwards, we let $C_0$ denote the constant that appears in the proposition.

\begin{proposition}
	\label{prop decay}
	Let $\F=\{f_0\}\cup\{f_p\colon p\in\P\}$ be a uniform covering frame. There exists a constant $C_0\in (0,1)$ depending only on $\F$, such that for all $f\in L^2(\R^d)$ and integers $K\geq 1$, 
	\[
	C_0^{K-1}\|f*f_0\|_{L^2}^2 + \sum_{p\in\P^K} \|U[p]f\|_{L^2}^2
	\leq C_0^{K-1}\|f\|_{L^2}^2.
	\]
\end{proposition}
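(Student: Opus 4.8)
The plan is to recast the claim as a geometric decay of the layerwise energy and then isolate the one genuinely new phenomenon: the complex modulus drives energy toward the origin in frequency. Writing $E_K=\sum_{p\in\P^K}\|U[p]f\|_{L^2}^2$, the stated inequality is, after rearranging, equivalent to $E_K\le C_0^{\,K-1}E_1$, because the frame identity (\ref{eq frame}) gives $\|f\|_{L^2}^2-\|f*f_0\|_{L^2}^2=\sum_{p\in\P}\|U[p]f\|_{L^2}^2=E_1$. So I would argue by induction on $K$: the base case $K=1$ is exactly the frame identity, and the whole content is the one-step contraction $E_K\le C_0\,E_{K-1}$ for $K\ge 2$, which propagates the bound.

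For the one-step estimate I would apply (\ref{eq frame}) to each $h=U[q]f$ with $q\in\P^{K-1}$, obtaining $\|h\|_{L^2}^2=\|h*f_0\|_{L^2}^2+\sum_{p\in\P}\|h*f_p\|_{L^2}^2$, and sum over $q$. Since $U[(q,p)]f=h*f_p$ up to modulus, this yields the exact identity $E_K=E_{K-1}-\sum_{q\in\P^{K-1}}\|U[q]f*f_0\|_{L^2}^2$. Thus $E_K\le C_0E_{K-1}$ is equivalent to showing that a fixed fraction of the layer's energy is absorbed by the low-pass, namely $\sum_{q}\|U[q]f*f_0\|_{L^2}^2\ge(1-C_0)E_{K-1}$.

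The engine of the argument—and the step I would develop most carefully—is that every such $h$ is a modulus $h=|g|$, where $g$ has $\hat g$ supported in some $\supp(\hat{f_p})$, a set of diameter at most $C_1$. Picking a point $c\in\supp(\hat g)$ and writing $g=e^{2\pi i c\cdot x}G$ with $\hat G(\xi)=\hat g(\xi+c)$ supported in $\{|\xi|\le C_1\}$, the modulus demodulates: $h=|g|=|G|$. Since $G$ is band-limited, Bernstein's inequality gives $\|\nabla G\|_{L^2}\le 2\pi C_1\|G\|_{L^2}$, and because $\big\||\nabla|G|\big\|_{L^2}\le\|\nabla G\|_{L^2}$ we conclude $\|\nabla h\|_{L^2}\le 2\pi C_1\|h\|_{L^2}$. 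Applying Chebyshev's inequality in frequency to the weight $|\xi|^2$ then forces $\hat h$ to concentrate near the origin: taking $R=2C_1$ yields $\int_{|\xi|\le R}|\hat h(\xi)|^2\,d\xi\ge\tfrac34\|h\|_{L^2}^2$. This holds uniformly in $p$ precisely because the uniform covering property bounds $\diam(\supp(\hat{f_p}))\le C_1$; it is the analogue, for Gabor-type frames, of Mallat's frequency-decreasing behaviour.

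The last and hardest step is converting this low-frequency concentration into a lower bound on the energy captured by $f_0$ itself. I would feed the concentration into the tiling partition of unity of Proposition \ref{prop tiling}: on a cube $\overline{Q_{C_1m}(0)}\supseteq\{|\xi|\le R\}$ one has $|\hat{f_0}|^2+\sum_{p\in\P[m]}|\hat{f_p}|^2\equiv 1$, so the concentrated energy of $h$ is split between $f_0$ and the low shells $\{f_p:p\in\P[m]\}$. The main obstacle is that $f_0$ alone need not absorb a fixed fraction, since $\supp(\hat{f_0})$ is typically smaller than $\{|\xi|\le R\}$, and the remaining low-frequency energy is routed into the low-shell children $U[(q,p)]f$ with $p\in\P[m]$. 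These children are themselves confined to a fixed low-frequency band, so the estimate must track how their energy is re-absorbed at subsequent layers rather than in a single step. It is exactly here that the covering structure, as opposed to one isolated step, must be used to extract the uniform contraction constant $C_0\in(0,1)$ depending only on $\F$, and I expect the bookkeeping for this cascade—with the concentration estimate above as its driving mechanism—to be the crux of the proof.
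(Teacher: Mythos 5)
Your reduction of the statement to the one-step contraction $\sum_{q\in\P^{K-1}}\|U[q]f*f_0\|_{L^2}^2\geq (1-C_0)E_{K-1}$ is exactly how the paper proceeds, and your Bernstein/diamagnetic concentration estimate (demodulate $g$, bound $\|\nabla|G|\|_{L^2}\leq\|\nabla G\|_{L^2}\leq 2\pi C_1\|G\|_{L^2}$, apply Chebyshev in frequency) is correct as stated. But it operates at the wrong scale, and the step you defer --- converting low-frequency concentration into a fixed-fraction absorption by $f_0$ --- is the entire content of the proposition, not bookkeeping. Your estimate places $3/4$ of the energy of $h=|g|$ in the ball $\{|\xi|\leq 2C_1\}$, whereas $\hat{f_0}$ is only assumed to be supported in \emph{some} neighborhood of the origin, which may be arbitrarily small relative to $C_1$. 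Nothing in your argument prevents the energy from sitting entirely in $\{|\xi|\leq 2C_1\}\setminus\supp(\hat{f_0})$; the proposed cascade then faces the identical obstruction at every subsequent layer, since the children indexed by low shells again concentrate only at scale $C_1$. So the cascade does not obviously terminate, and no uniform $C_0\in(0,1)$ is extracted. This is precisely the wall described in the paper's remark following the proposition: pointwise information such as $\widehat{|g|}(0)=\|g\|_{L^1}>0$ (or concentration at a scale decoupled from $\supp(\hat{f_0})$) cannot be converted into a bound $\|\,|g|*f_0\|_{L^2}^2\geq c\|g\|_{L^2}^2$ with $c$ independent of $g$.

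The missing idea in the paper's proof is a positivity trick rather than a regularity estimate. One takes a Wendland-type function $\phi\geq 0$ (pointwise, in space) with $\hat\phi$ continuous, $|\hat\phi|\leq|\hat{f_0}|$, and $|\hat\phi|^2\geq C$ on a cube $Q_R(0)$; by the uniform covering property, $\supp(\hat{f_p})$ is covered by $N$ cubes $Q_R(\xi_{p,n})$ with $N$ independent of $p$. The key inequality is pointwise: since $\phi\geq 0$,
\[
|(g*M_{\xi}\phi)(x)|\leq (|g|*\phi)(x)\quad\text{for every modulation } M_\xi\phi(x)=e^{2\pi i\xi\cdot x}\phi(x),
\]
so the \emph{baseband} convolution $|g|*\phi$ simultaneously dominates, in $L^2$, the energy that the modulated copies $M_{\xi_{p,n}}\phi$ pick up on \emph{each} covering cube, no matter where those cubes sit in frequency. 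Summing over the $N$ cubes and using $|\hat\phi|\leq|\hat{f_0}|$ yields $\|\,|g|*f_0\|_{L^2}^2\geq (C/N)\|g\|_{L^2}^2$ uniformly in $g=U[q']f*f_p$, which is inequality (\ref{eq1}) and gives $C_0=1-C/N$ in one step. This is what transfers energy from arbitrary frequency locations down to the (possibly tiny) support of $\hat{f_0}$, and it is the ingredient your proposal lacks; your concentration lemma, while true, cannot substitute for it.
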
 

\begin{proof}
	By assumption, $\hat{f_0}$ is continuous, supported in a neighborhood of the origin, and $|\hat{f_0}(0)|=1$. Then, by appropriately scaling the function discussed in Proposition \ref{prop phi}, there exists a non-negative $\phi$ such that $\hat\phi$ is continuous, decreasing along each Euclidean coordinate, $|\hat\phi(0)|>0$, and $|\hat\phi(\xi)|\leq|\hat{f_0}(\xi)|$ for all $\xi\in\R^d$. Then, there exist constants $R=R_\phi>0$ and $C=C_\phi\in (0,1)$, such that $|\hat\phi(\xi)|^2\geq C$ for all $\xi\in Q_R(0)$. By the uniform covering property, there exists an integer $N=N_R>0$, such that for all $p\in\P$, there exist $\{\xi_{p,n}\in\R^d\colon n=1,2,\dots, N\}$ such that
	\[
	\supp(\hat{f_p})
	\subset\bigcup_{n=1}^N Q_R(\xi_{p,n}).
	\]
	
	Let $1\leq k\leq K$ and $q\in\P^{k-1}$. By Plancherel's formula and the above inclusion,
	\[
	\|U[q]f*f_p\|_{L^2}^2
	=\int_{\R^d} |\hat{U[q]f}(\xi)|^2|\hat{f_p}(\xi)|^2\ d\xi
	\leq \sum_{n=1}^N \int_{Q_R(\xi_{p,n})} |\hat{U[q]f}(\xi)|^2|\hat{f_p}(\xi)|^2\ d\xi. 
	\]
	Since $|\hat\phi(\xi-\xi_{p,n})|^2\geq C$ for all $\xi\in Q_R(\xi_{p,n})$, we have
	\[
	\sum_{n=1}^N \int_{Q_R(\xi_{p,n})} |\hat{U[q]f}(\xi)|^2|\hat{f_p}(\xi)|^2\ d\xi
	\leq \frac{1}{C}\sum_{n=1}^N \int_{Q_R(\xi_{p,n})} |\hat{U[q]f}(\xi)|^2|\hat{f_p}(\xi)|^2 |\hat\phi(\xi-\xi_{p,n})|^2\ d\xi.
	\]
	By Plancherel's forumla, we have
	\[
	\sum_{n=1}^N \int_{Q_R(\xi_{n,p})} |\hat{U[q]f}|^2|\hat{f_p}(\xi)|^2 |\hat\phi(\xi-\xi_{p,n})|^2\ d\xi
	\leq \sum_{n=1}^N \|U[q]f*f_p*M_{\xi_{p,n}}\phi\|_{L^2}^2,
	\]
	where $M_y$ is the modulation operator, $M_yf(x)=e^{2\pi iy\cdot x}f(x)$. Using that $\phi\geq 0$ and triangle inequality, we have
	\[
	\sum_{n=1}^N \|U[q]f*f_p*M_{\xi_{p,n}}\phi\|_{L^2}^2
	\leq \sum_{n=1}^N \| \ |U[q]f*f_p|*\phi \|_{L^2}^2.
	\]
	Observe that the terms inside the summation on the right hand side do not depend on the index $n$. Using Plancherel's formula and that $|\hat\phi(\xi)|\leq|\hat{f_0}(\xi)|$ for all $\xi\in\R^d$, we have
	\[
	\| \ |U[q]f*f_p|*\phi \|_{L^2}^2
	\leq \|\ |U[q]f*f_p|*f_0 \|_{L^2}^2.
	\]
	Combining the previous inequalities and rearranging the result, we obtain
	\begin{equation}
	\label{eq1}
	\big\|\ |U[q]f*f_p|*f_0 \|_{L^2}^2
	\geq \frac{C}{N}\|U[q]f*f_p\|_{L^2}^2.
	\end{equation}
	
	The strength of this inequality is that $C$ and $N$ are independent of $q\in\P^{k-1}$ and $p\in\P$. Then, summing this inequality over all $p\in\P$ and $q\in\P^{k-1}$, we see that
	\[
	\sum_{p\in\P^k} \|U[p]f*f_0\|_{L^2}^2
	\geq \frac{C}{N} \sum_{p\in\P^k}\|U[p]f\|_{L^2}^2.
	\]
	Applying the frame identity (\ref{eq frame}) to the left hand side and setting $C_0=1-C/N$, we have
	\[
	\sum_{p\in\P^{k+1}} \|U[p]f\|_{L^2}^2\leq C_0\sum_{p\in\P^k}\|U[p]f\|_{L^2}^2.
	\]
	Iterating the above inequality, we obtain
	\[
	\sum_{p\in\P^K}\|U[p]f\|_{L^2}^2
	\leq C_0^{K-1}\sum_{p\in\P}\|U[p]f\|_{L^2}^2
	= C_0^{K-1}\|f\|^2_{L^2}-C_0^{K-1}\|f*f_0\|_{L^2}^2. 
	\]
\end{proof}

\begin{remark}
	The key step in the proof of Proposition \ref{prop decay} is to obtain the inequality (\ref{eq1}), which is of the form, 
	\[
	\| \ |f*g|*f_0\|_{L^2}\geq C\|f*g\|_{L^2},
	\]
	for some constant $C>0$ \emph{independent} of $f,g\in L^2(\R^d)$. It is straightforward to obtain a lower bound where the constant \emph{depends} on $f$ and $p$. Indeed, suppose $f*g\not=0$. Since $|f*g|$ is continuous, we have
	\[
	(|f*g|)^\wedge(0)
	=\int_{\R^d} |(f*g)(x)|\ dx
	=\|f*g\|_{L^1}
	>0.
	\] 
	By continuity of $|f*g|$, the above inequality, and the assumption that $|\hat{f_0}(0)|=1$, we can find a sufficiently small neighborhood $V=V_{f_0,f,g}$ of the origin and constants $C_{f_0},C_{f,g}>0$, such that $|\hat{f_0}(\xi)|\geq C_{f_0}$ and $|(|f*g|)^\wedge(\xi)|\geq C_{f,g}\|f*g\|_{L^1}$ for all $\xi\in V$. Then,
	\[
	\| \ |f*g|*f_0 \|_{L^2}^2
	\geq \int_{V} |(|f*g|)^\wedge(\xi)|^2 |\hat{f_0}(\xi)|^2\ d\xi
	\geq C_{f_0} C_{f,g}\|f*g\|_{L^1}|V|,
	\]
	where $|V|$ is the Lebesgue measure of $V$. However, both $C_{f,g}$ and $|V|$ depend on $f$ and $g$, and $L^1(\R^d)$ and $L^2(\R^d)$ norms are not equivalent. Thus, this inequality is not very useful for our purposes. However, this naive reasoning suggests that a more sophisticated covering argument, such as the one given in the proof of Proposition \ref{prop decay}, could work. 
\end{remark}

\begin{remark}
	\label{remark energy}
	We have several comments about Proposition \ref{prop decay}.	
	\begin{enumerate}[(a)]\itemsep+.5em
		\item 
		Since $C_0$ describes the rate of decay of $\sum_{p\in\P^k}\|U[p]f\|^2_{L^2}$, it is of interest to determine the optimal (smallest) value $C_0$ for which Proposition \ref{prop decay} holds. Minimizing $C_0$ is equivalent to maximizing the ratio $C/N$, where these constants were defined in the proof. Since $N$ is related to the optimal covering by cubes of side length $2R$ and $C$ is the minimum of $|\hat\phi|^2$ on $Q_R(0)$, both $C$ and $N$ decrease as $R$ increases. 
		
		\item
		Let us examine why the argument proving inequality (\ref{eq1}) fails for the wavelet case. Recall that $\psi_{2^j,r}$ has frequency scale $2^j$ whereas $\varphi_{2^J}$ has frequency scale $2^{-J}$. Using the same argument as in the proof of (\ref{eq1}), we obtain: For each $j>-J$, there exists an integer $N_j>0$ and $C_J\in (0,1)$, such that for all $k\geq 1$, $p\in\Lambda^k$, $r\in G$, and $f\in L^2(\R^d)$,
		\[
		\|\ |U[p]f*\psi_{2^j,r}|*\varphi_{2^J}\|_{L^2}^2
		\geq \frac{C_J}{N_j}\|U[p]f*\psi_{2^j,r}\|_{L^2}^2.
		\]
		The measure of $\supp(\hat{\psi_{2^j,r}})$ is proportional to $2^j$, and $N_j$ is the number of cubes required to cover this set with cubes of side length bounded by a constant multiple of $2^{-J}$. Hence, $\lim_{j\to\infty} N_j=\infty$ and this inequality is not meaningful for large $j$.
		
		\item
		Mallat \cite[Lemma 2.8, page 1344]{mallat2012group} established a qualitative depth decay property for $\S_\W$ and Waldspurger \cite{waldspurger2016exponential} (this preprint is Chapter 5 from her thesis \cite{waldspurger2015these}) proved a quantitative result of a similar nature. Since the completion of the first draft of this paper, the recent preprint of Wiatowski, Grohs, and B\"olcskei \cite{wiatowski2017energy} established an exponential decay of energy property, but requires additional assumptions. We discuss their results in further detail in Section \ref{section discussion}.
		
		\item
		Rather interestingly, numerical experiments in \cite[page 1345]{mallat2012group} have suggested that the exponential decay of energy described in the Proposition \ref{prop decay} holds for the wavelet case. Mallat conjectured that there exists $C\in (0,1)$ such that
		\[
		\sum_{\lambda\in\Lambda^K} \|U[\lambda] f\|_{L^2}^2
		\leq C^{K-1}\|f\|_{L^2}^2,
		\]
		for all $f\in L^2(\R^d)$, and $K\geq 1$. Determining whether this property holds for a given wavelet frame is of interest in the scattering community.
	\end{enumerate}
\end{remark}

We are ready to prove our first main theorem, which shows that $\S_\F$ satisfies several desirable properties as a feature extractor. 

\begin{theorem}
	\label{thm1}
	Let $\F=\{f_0\}\cup\{f_p\colon p\in\P\}$ be a uniform covering frame, and let $\S_\F$ be the Fourier scattering transform. 
	\begin{enumerate}[(a)]\itemsep+1em
		\item 
		\underline{Energy conservation}: For all $f\in L^2(\R^d)$,
		\[
		\|\S_\F(f)\|_{L^2\ell^2}=\|f\|_{L^2}.
		\]
		\item
		\underline{Non-expansiveness}: For all $f,g\in L^2(\R^d)$, 
		\[
		\|\S_\F(f)-\S_\F(g)\|_{L^2\ell^2}\leq \|f-g\|_{L^2}.
		\]
		\item
		\underline{Translation contraction}: There exists $C>0$ depending only on $\F$, such that for all $f\in L^2(\R^d)$ and $y\in \R^d$, 
		\[
		\|\S_\F(T_y f)-\S_\F(f)\|_{L^2\ell^2}
		\leq C|y|\|\nabla f_0\|_{L^1} \|f\|_{L^2}.
		\] 
		
		\item
		\underline{Additive diffeomorphisms contraction}: Let $\epsilon\in [0,1)$ and $R>0$. There exists a universal constant $C>0$, such that for all $(\epsilon,R)$ band-limited $f\in L^2(\R^d)$, and all $\tau\in C^1(\R^d;\R^d)$ with $\|\nabla\tau\|_{L^\infty}\leq 1/(2d)$,
		\[
		\|\S_\F(T_\tau f)-\S_\F(f)\|_{L^2\ell^2}
		\leq C(R\|\tau\|_{L^\infty}+\epsilon)\|f\|_{L^2}.
		\]
	\end{enumerate}
\end{theorem}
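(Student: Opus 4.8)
Parts (a) and (b) should fall out of the preliminary results by a limiting argument, so the real work is in the translation estimate (c) and, above all, the diffeomorphism estimate (d).

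For (a) I would start from the frame identity (\ref{eq4}) and let $K\to\infty$. The averaged terms increase monotonically to $\|\S_\F(f)\|_{L^2\ell^2}^2$, so it suffices to kill the leftover energy $\sum_{p\in\P^{K+1}}\|U[p]f\|_{L^2}^2$. This is exactly what Proposition \ref{prop decay} delivers, since that term is bounded by $C_0^{K}\|f\|_{L^2}^2$ with $C_0\in(0,1)$; hence it vanishes in the limit and energy conservation follows. For (b) I would simply let $K\to\infty$ in the contraction inequality (\ref{eq5}): the left-hand side increases monotonically to $\|\S_\F(f)-\S_\F(g)\|_{L^2\ell^2}^2$ while staying bounded by $\|f-g\|_{L^2}^2$, which is non-expansiveness with no further effort.

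For (c) the key structural fact is that each propagator commutes with translation, because $U[p]$ is assembled from convolutions and the pointwise modulus; consequently $U[p](T_yf)=T_y(U[p]f)$. Since translating before convolving is the same as convolving against a translated window, I would rewrite each layerwise difference as
\[
U[p](T_yf)*f_0 - U[p]f*f_0 = U[p]f*(T_yf_0 - f_0).
\]
Young's inequality bounds the $L^2$ norm of the right side by $\|U[p]f\|_{L^2}\|T_yf_0-f_0\|_{L^1}$, and the fundamental theorem of calculus gives $\|T_yf_0-f_0\|_{L^1}\le |y|\,\|\nabla f_0\|_{L^1}$. Squaring, summing over all $p\in\P^k$ and $k\ge 0$, and pulling out the common factor, I am left with the total energy $\sum_{k\ge 0}\sum_{p\in\P^k}\|U[p]f\|_{L^2}^2$. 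This is finite: the $k=0$ term is $\|f\|_{L^2}^2$, and Proposition \ref{prop decay} bounds the layer-$k$ energy by $C_0^{k-1}\|f\|_{L^2}^2$, so summing the geometric series produces a constant $C=C(\F)$ and the stated estimate.

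Part (d) is the main obstacle, precisely because $T_\tau$ does \emph{not} commute with convolution, so the clean rewriting of (c) is unavailable. Since the asserted constant is universal (independent of $\F$), the natural route is to invoke non-expansiveness from (b) to collapse the whole estimate to bounding $\|T_\tau f - f\|_{L^2}$, a statement purely about $f$ and $\tau$. I would split $f=f_{\mathrm{low}}+f_{\mathrm{high}}$ by restricting $\hat f$ to $Q_R(0)$ and to its complement. For the low part, expanding $f_{\mathrm{low}}(x-\tau(x))-f_{\mathrm{low}}(x)$ by the fundamental theorem of calculus along the segment $s\mapsto x-s\tau(x)$ produces a factor $\tau(x)\cdot\nabla f_{\mathrm{low}}$; the hypothesis $\|\nabla\tau\|_{L^\infty}\le 1/(2d)$ ensures each $x\mapsto x-s\tau(x)$ is a diffeomorphism whose Jacobian determinant is bounded below by a dimensional constant, so a change of variables controls the integral by $\|\tau\|_{L^\infty}\|\nabla f_{\mathrm{low}}\|_{L^2}$, and $\supp(\hat{f_{\mathrm{low}}})\subset Q_R(0)$ gives $\|\nabla f_{\mathrm{low}}\|_{L^2}\lesssim R\|f\|_{L^2}$. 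The high part is handled crudely by $\|T_\tau f_{\mathrm{high}}-f_{\mathrm{high}}\|_{L^2}\le \|T_\tau f_{\mathrm{high}}\|_{L^2}+\|f_{\mathrm{high}}\|_{L^2}$, each controlled by $\|f_{\mathrm{high}}\|_{L^2}$ via the same Jacobian lower bound, which the band-limited assumption controls in terms of $\epsilon$. Combining the two contributions yields $C(R\|\tau\|_{L^\infty}+\epsilon)\|f\|_{L^2}$. The delicate points I expect to watch are the precise determinant lower bound underpinning the change of variables, and checking that the low/high-frequency split produces the $R$ and $\epsilon$ dependencies in exactly the stated form.
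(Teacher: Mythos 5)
Your parts (a), (b), and (c) follow the paper's own proof essentially line for line: (a) is the frame identity (\ref{eq4}) combined with the exponential decay of Proposition \ref{prop decay} to kill the residual layer energy, (b) is the $K\to\infty$ limit of (\ref{eq5}), and (c) is exactly the commutation--Young--fundamental-theorem-of-calculus argument, with the geometric series from Proposition \ref{prop decay} supplying the constant. For (d) you share the paper's skeleton---reduce via non-expansiveness to bounding $\|T_\tau f - f\|_{L^2}$, then split into low and high frequency pieces---but your key technical step is genuinely different. The paper smooths with a Schwartz mollifier, $f_R=f*\phi_R$ with $\hat{\phi_R}=1$ on $Q_R(0)$, and bounds $\|T_\tau f_R - f_R\|_{L^2}$ by viewing it as an integral operator with kernel $k(x,y)=\phi_R(x-\tau(x)-y)-\phi_R(x-y)$ and applying Schur's lemma (following \cite[Proposition 5]{wiatowski2015mathematical}); there the derivative lands on the mollifier, so that core estimate holds for \emph{every} $f\in L^2(\R^d)$, band-limitedness being used only to compare $f$ with $f_R$. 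You instead use a sharp frequency cutoff and put the derivative on $f_{\mathrm{low}}$ itself, combining the fundamental theorem of calculus, the Jacobian lower bound $|\det(I-s\nabla\tau)|\geq 1/2$ for the change of variables, and Bernstein's inequality $\|\nabla f_{\mathrm{low}}\|_{L^2}\lesssim R\|f\|_{L^2}$. This is valid---$f_{\mathrm{low}}$ is smooth with $\nabla f_{\mathrm{low}}\in L^2$ since $\hat{f_{\mathrm{low}}}$ is compactly supported---and more elementary, replacing the Schur test by Cauchy--Schwarz and a change of variables; the cost is that your core estimate is tied to band-limited functions rather than holding on all of $L^2$, which is harmless for this theorem. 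One caveat that affects your argument and the paper's equally: under the paper's definition $\|\hat f\|_{L^2(Q_R(0))}\geq(1-\epsilon)\|f\|_{L^2}$, the high-frequency tail satisfies only $\|f_{\mathrm{high}}\|_{L^2}\leq\sqrt{2\epsilon}\,\|f\|_{L^2}$, so both proofs as written literally produce $\sqrt{\epsilon}$ rather than $\epsilon$ in the final bound; the paper glosses over this when it asserts $\|f-f_R\|_{L^2}\leq\epsilon\|f\|_{L^2}$ ``by assumption,'' so this is a defect of the definition rather than a gap in your approach.
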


\begin{proof}
	\indent
	\begin{enumerate}[(a)]\itemsep+.5em
		\item 
		Using identity (\ref{eq4}) in Proposition \ref{prop frame} and Proposition \ref{prop decay}, we obtain
		\begin{align*}
		\|\S_\F(f)\|_{L^2\ell^2}^2
		&=\lim_{K\to\infty} \sum_{k=0}^K \sum_{p\in\P^k} \|U[p]f*f_0\|_{L^2}^2 \\
		&=\|f\|_{L^2}^2 - \lim_{K\to\infty} \sum_{p\in\P^K}\|U[p]f\|_{L^2}^2
		=\|f\|_{L^2}^2.
		\end{align*}
		\item
		Using the identity (\ref{eq5}) in Proposition \ref{prop frame}, we obtain
		\begin{align*}
		\|\S_\F(f)-\S_\F(g)\|_{L^2\ell^2}^2
		&=\lim_{K\to\infty} \sum_{k=0}^K\sum_{p\in\P^k} \|U[p]f*f_0-U[p]g*f_0\|_{L^2}^2 \\
		&\leq \|f-g\|_{L^2}^2.
		\end{align*}
		
		\item
		By definition, we have
		\[
		\|\S_\F(T_y f)-\S_\F(f)\|_{L^2\ell^2}^2
		=\sum_{k=0}^\infty \sum_{p\in\P^k} \|U[p](T_y f)*f_0-U[p]f*f_0\|_{L^2}^2.
		\]
		Since translation commutes with convolution and the complex modulus, for all $k\geq 0$ and $p\in\P^k$, we have 
		\[
		U[p](T_yf)*f_0
		=T_y(U[p]f)*f_0
		=U[p]f*T_yf_0.
		\]
		This fact, combined with Young's inequality, yields
		\begin{align*}
		\|U[p](T_y f)*f_0-U[p]f*f_0\|_{L^2}
		&=\|U[p]f*(T_y f_0-f_0)\|_{L^2} \\
		&\leq \|T_y f_0-f_0\|_{L^1}\|U[p]f\|_{L^2}.
		\end{align*}
		Then, we have
		\begin{equation}
		\label{eq3}
		\|\S_\F(T_y f)-\S_\F(f)\|_{L^2\ell^2}
		\leq \|T_y f_0-f_0\|_{L^1} \(\sum_{k=0}^\infty\sum_{p\in\P^k} \|U[p] f\|_{L^2}^2\)^{1/2}.
		\end{equation}
		We first bound the summation in (\ref{eq3}). Using Proposition \ref{prop decay}, we have
		\[
		\sum_{k=0}^\infty\sum_{p\in\P^k} \|U[p]f\|_{L^2}^2
		\leq \|f\|_{L^2}^2 + \sum_{k=1}^\infty C_0^{k-1}\|f\|_{L^2}^2
		=\(1+\frac{1}{1-C_0}\)\|f\|_{L^2}^2.
		\]
		To bound the $L^1$ term in (\ref{eq3}), we use the fundamental theorem of calculus, which is justified by the assumption that $f_0\in C^1(\R^d)$. Then, we obtain
		\[
		\int_{\R^d} |f_0(x-y)-f_0(x)|\ dx
		=\int_{\R^d}  \Big| \int_0^1 \nabla f_0(x-ty)\cdot y\ dt \Big|\ dx
		\leq |y|\|\nabla f_0\|_{L^1}.
		\]
		
		\item
		Let $\phi$ be a Schwartz function such that $\hat\phi$ is real-valued, supported in $\overline{Q_2(0)}$, and $\hat\phi(\xi)=1$ for all $\xi\in Q_1(0)$. Let $\phi_R(x)=R^d\phi(Rx)$, and let $f_R=f*\phi_R$. By the non-expansiveness property and triangle inequality
		\begin{equation}
		\label{eq2} 
		\|\S_\F(T_\tau f)-\S_\F(f)\|_{L^2\ell^2}
		\leq \|f-f_R\|_{L^2} + \|T_\tau(f_R)-f_R\|_{L^2}+ \|T_\tau(f_R)-T_\tau f\|_{L^2}.
		\end{equation}
		The bound for the first term in (\ref{eq2}) follows by assumption
		\[
		\|f-f_R\|_{L^2}\leq \epsilon\|f\|_{L^2}.
		\] 
		To bound the second term in (\ref{eq2}), we make the change of variable $u=x-\tau(x)$ and note that
		\[
		\Big|\frac{\partial u}{\partial x}\Big|
		=|\det(I-\nabla\tau(x))|
		\geq (1-d\|\nabla\tau\|_{L^\infty})
		\geq \frac{1}{2},
		\]
		see \cite{brent2015note}. Then, we have
		\begin{align*}
		\|T_\tau(f_R)-T_\tau f\|_{L^2}^2
		&=\int_{\R^d} |f_R(x-\tau(x))-f(x-\tau(x))|^2\ dx \\
		&\leq 2\|f-f_R\|_{L^2}^2 \\
		&\leq 2\epsilon^2\|f\|_{L^2}^2.
		\end{align*}
		It remains to bound the third term of (\ref{eq2}), and we use the argument proved in \cite[Proposition 5]{wiatowski2015mathematical}. We have
		\begin{align*}
		(T_\tau f_R)(x)-f_R(x)
		&=(f*\phi_R)(x-\tau(x))-(f*\phi_R)(x) \\
		&=\int_{\R^d} (\phi_R(x-\tau(x)-y)-\phi_R(x-y)) f(y)\ dy.
		\end{align*}
		The above can be interpreted as an integral kernel operator acting on $f\in L^2(\R^d)$ with kernel
		\[
		k(x,y)=\phi_R(x-\tau(x)-y)-\phi_R(x-y).
		\]
		The proof is completed by verifying that this kernel satisfies the assumptions of Schur's lemma with the appropriate bounds. By the fundamental theorem of calculus, we have 
		\begin{align*}
		|k(x,y)|
		&=\Big|\int_0^1 \nabla\phi_R(x-t\tau(x)-y)\cdot\tau(x)\ dt\Big| \\
		&\leq\|\tau\|_{L^\infty} \int_0^1 |\nabla\phi_R(x-t\tau(x)-y)|\ dt.
		\end{align*}
		\begin{enumerate}[(i)]
			\item 
			For each $x\in\R^d$, we have
			\begin{align*}
			\int_{\R^d} |k(x,y)|\ dy
			&\leq \|\tau\|_{L^\infty} \int_0^1\int_{\R^d} |\nabla\phi_R(x-t\tau(x)-y)|\ dydt \\
			&=R\|\nabla\phi\|_{L^1}\|\tau\|_{L^\infty}.
			\end{align*}
			\item
			For each $y\in\R^d$, we have
			\[
			\int_{\R^d} |k(x,y)|\ dx
			\leq\|\tau\|_{L^\infty} \int_0^1\int_{\R^d} |\nabla\phi_R(x-t\tau(x)-y)|\ dx dt.
			\]
			For fixed $y\in\R^d$ and $t\in[0,1]$, we make the change of variables $v=x-t\tau(x)-y$ and note that
			\[
			\Big|\frac{\partial v}{\partial x}\Big|
			=|\det(I-t\nabla\tau(x))|
			\geq (1-td\|\nabla\tau\|_{L^\infty})
			\geq \frac{1}{2}.
			\]
			Thus, for all $y\in\R^d$,
			\[
			\int_{\R^d} |k(x,y)|\ dx
			\leq 2\|\tau\|_{L^\infty} \int_0^1\int_{\R^d} |\nabla\phi_R(v)| \ dvdt
			=2R\|\nabla\phi\|_{L^1}\|\tau\|_{L^\infty}.
			\]
		\end{enumerate}
		By Schur's lemma, we conclude that 
		\[
		\|T_\tau f-f\|_{L^2}\leq 2R\|\nabla\phi\|_{L^1}\|\tau\|_{L^\infty} \|f\|_{L^2}.
		\]
		Combining the above results, we obtain the inequality
		\[
		\|\S_\F(T_\tau f)-\S_\F(f)\|_{L^2\ell^2}
		\leq (2R\|\nabla\phi\|_{L^1}\|\tau\|_{L^\infty}+\epsilon+\sqrt 2\epsilon)\|f\|_{L^2}.
		\]
		Set $C=\max(2\|\nabla\phi\|_{L^1},1+\sqrt 2)$, which completes the proof.
	\end{enumerate}
\end{proof}

\section{Truncated Fourier scattering transform}
\label{section truncation}

Before we prove anything about the truncated Fourier scattering transform, we provide some motivation for our choice of truncation and the assumptions that we require below. At this point, it is not clear whether $\S_\F[M,K]$ is non-trivial for appropriate $M$ and $K$. For example, let us focus our attention on the first layer. Say we fix $M$ and compute a finite number of first-order coefficients,
\[
\{|f*f_p|*f_0\colon p\in\P[M]\}.
\]
Observe that there exists a non-trivial $f\in L^2(\R^d)$ such that $f*f_p=0$ for all $p\in\P[M]$. This is already problematic, since it shows that this truncated operator has non-trivial kernel and consequently, is not bounded from below. This shows that, in order to truncate just the first layer of coefficients, we need an additional assumption on $f\in L^2(\R^d)$. The most natural assumption is that $f$ is $(\epsilon,R)$ band-limited, and then $M$ can be chosen appropriately depending on $R$. 

Now, we focus our attention on the higher-order terms. The naive idea is to only compute coefficients indexed by the finite subset $\P[M]^k\subset\P^k$, namely,
\[
\{U[p]f*f_0\colon p\in \P[M]^k,\ k=0,1,\dots,K\}.
\]
This truncation tosses away the high frequency terms and might seem reasonable since Proposition \ref{prop decay} showed that the complex modulus pushes higher frequencies to lower frequencies. Indeed, for $f\in L^2(\R^d)$ and $p\in\P$, the proposition showed that $(|f*f_p|)^\wedge$ is non-zero in a neighborhood of the origin even though $(f*f_p)^\wedge$ is compactly supported away from the origin. However, the complex modulus can also push lower frequencies to higher frequencies. To see why, we note that the function $|f*f_p|$ is continuous, but in general, it is not $C^1(\R^d)$; even if we make the very mild assumption that $\nabla f_p\in L^1(\R^d)$, we can only conclude that $|f*f_p|$ has one distributional derivative belonging to $L^2(\R^d)$. Thus, the decay of $(|f*f_p|)^\wedge$ is quite slow, even though $(f*f_p)^\wedge$ is compactly supported! This observation shows that we must be careful when truncating $\S_\F$. However, we shall see that our choice of truncation in fact works, but only requires an alternative argument. 

To demonstrate that our truncation is acceptable, we are concerned with bounding terms of the form, $\|\ |f*f_p| *f_q\|_{L^2}$, where $p\in\P[M]$ and $q\in\P[M]^c$. These are the terms that are thrown away due to truncation, and since $f_p$ has lower frequencies than $f_q$, we expect them to be small. The following proposition shows that this intuition holds. That is, most of the energy is concentrated along the frequency decaying paths.   

\begin{proposition}
	\label{prop path} 
	Let $\F=\{f_0\}\cup \{f_p\colon p\in\P\}$ be a uniform covering frame. For any integer $M\geq 1$, there exists $C_M\in (0,1)$ such that $C_M\to 1$ as $M\to\infty$ and for all integers $k\geq 1$, $p\in\P^k$, and $f\in L^2(\R^d)$,
	\[
	\|U[p]f*f_0\|_{L^2}^2 + \sum_{q\in\P[M]} \|U[p]f*f_q\|_{L^2}^2
	\geq C_M\|U[p]f\|_{L^2}^2.
	\]
\end{proposition}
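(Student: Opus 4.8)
The plan is to prove the inequality for the single function $g=U[p]f$, using two consequences of $k\ge1$: that $g$ is the modulus of a function whose Fourier transform has uniformly bounded support, and that taking the modulus is insensitive to modulation. Writing $p=(p_1,\dots,p_k)$ and $p'=(p_1,\dots,p_{k-1})\in\P^{k-1}$, we have $g=U[p]f=|h|$ where $h=U[p']f*f_{p_k}$, so $\widehat h=\widehat{U[p']f}\,\widehat{f_{p_k}}$ is supported in $\supp(\widehat{f_{p_k}})$, a compact set of diameter at most $C_1$ by the uniform covering property. By Plancherel and the tiling identity of Proposition~\ref{prop tiling}, the left-hand side equals $\int_{\R^d}|\widehat g(\xi)|^2\,B(\xi)\,d\xi$ with $B(\xi)=|\widehat{f_0}(\xi)|^2+\sum_{q\in\P[M]}|\widehat{f_q}(\xi)|^2$; since $B\ge0$ everywhere and $B\equiv1$ on $\overline{Q_{C_1M}(0)}$, this is at least $\int_{\overline{Q_{C_1M}(0)}}|\widehat g(\xi)|^2\,d\xi$. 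It therefore suffices to bound the tail $\int_{\R^d\setminus Q_{C_1M}(0)}|\widehat g|^2\,d\xi$ by $(1-C_M)\|g\|_{L^2}^2$ for some $C_M\to1$.

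The heart of the argument is a Sobolev estimate on $g$ that is uniform over all paths. Since $\supp(\widehat h)$ has diameter at most $C_1$, choose its circumcenter $\xi_0$ and put $h_0=M_{-\xi_0}h$, so that $\widehat{h_0}=\widehat h(\,\cdot+\xi_0)$ is supported in a ball about the origin of radius $\rho\le C_1/\sqrt2$ (the circumradius is controlled by the diameter via Jung's theorem), and crucially $|h_0|=|h|=g$ pointwise because modulation leaves the modulus unchanged; in particular $\|h_0\|_{L^2}=\|g\|_{L^2}$. Bernstein's inequality gives $\|\nabla h_0\|_{L^2}^2=4\pi^2\int_{\R^d}|\xi|^2|\widehat{h_0}(\xi)|^2\,d\xi\le4\pi^2\rho^2\|g\|_{L^2}^2$. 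Invoking the diamagnetic (Kato) inequality $|\nabla|h_0||\le|\nabla h_0|$ almost everywhere—which, as recorded in the discussion preceding the proposition, is exactly what ensures $g=|h_0|$ has a distributional gradient in $L^2$—we obtain
\[
\int_{\R^d}|\xi|^2|\widehat g(\xi)|^2\,d\xi=\frac{1}{4\pi^2}\|\nabla g\|_{L^2}^2\le\frac{1}{4\pi^2}\|\nabla h_0\|_{L^2}^2\le\rho^2\|g\|_{L^2}^2.
\]
The decisive feature is that the right-hand side depends only on $\rho\le C_1/\sqrt2$, and not on the (unbounded) frequency location of $f_{p_k}$.

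To conclude I would apply Chebyshev's inequality in frequency: for $\xi\notin Q_{C_1M}(0)$ one has $|\xi|\ge|\xi|_\infty\ge C_1M$, whence
\[
\int_{\R^d\setminus Q_{C_1M}(0)}|\widehat g(\xi)|^2\,d\xi\le\frac{1}{(C_1M)^2}\int_{\R^d}|\xi|^2|\widehat g(\xi)|^2\,d\xi\le\frac{\rho^2}{C_1^2M^2}\,\|g\|_{L^2}^2\le\frac{1}{2M^2}\|g\|_{L^2}^2.
\]
Setting $C_M=1-\rho^2/(C_1^2M^2)$ proves the claim; since $\rho^2/C_1^2\le1/2$ we have $C_M\ge1-1/(2M^2)\in(0,1)$ for every $M\ge1$, and $C_M\to1$ as $M\to\infty$.

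The main obstacle is the middle step. Taking the modulus destroys the compact frequency localization of $h$, so $g$ really does carry high-frequency energy and no direct band-limiting is available; worse, the naive estimate $\|\nabla g\|_{L^2}\le\|\nabla h\|_{L^2}$ is worthless because $\|\nabla h\|_{L^2}$ grows with the distance of $\supp(\widehat{f_{p_k}})$ from the origin, which is unbounded over $p_k\in\P$. The demodulation observation—that $|h|$ is unchanged if $h$ is first modulated to sit at the origin—is precisely what replaces this distance by the uniformly bounded diameter $C_1$, and so yields a contraction constant $C_M$ independent of the path $p$ and the depth $k$. Verifying that the diamagnetic inequality applies (i.e.\ that $g\in H^1$) and tracking the geometric constant from Jung's theorem are the remaining technical points.
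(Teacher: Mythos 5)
Your proof is correct, and it takes a genuinely different route from the paper's. Both arguments rest on the same two facts---the modulus is invariant under modulation, and $\supp(\hat{f_{p_k}})$ has diameter at most $C_1$ uniformly in the path---but they deploy them differently. The paper bounds $B(\xi)=|\hat{f_0}(\xi)|^2+\sum_{q\in\P[M]}|\hat{f_q}(\xi)|^2$ from below by $|\hat{\phi_M}|^2$, where $\phi_M$ is a dilate of the non-negative kernel of Proposition \ref{prop phi}, and then uses the pointwise inequality $(|h|*\phi_M)(x)\geq|(h*M_{\xi_s}\phi_M)(x)|$ (valid since $\phi_M\geq 0$) together with Plancherel and a lower bound for $|\hat{\phi_M}|^2$ on the shifted support of $\hat{f_s}$; everything stays at the level of elementary convolution inequalities, parallel to the proof of Proposition \ref{prop decay}. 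You instead bound $B$ below by the indicator of $\overline{Q_{C_1M}(0)}$, reducing the claim to a frequency-tail estimate for $\hat g$, and prove that estimate via demodulation, Bernstein's inequality, the diamagnetic inequality $|\nabla|h_0||\leq|\nabla h_0|$, and Chebyshev. Your route imports two outside facts---that $u\in H^1$ implies $|u|\in H^1$ with the gradient bound, and Jung's theorem, which is not cosmetic here: with the trivial circumradius bound $\rho\leq C_1$ you would only get $C_M\geq 1-1/M^2$, which vanishes at $M=1$---but in exchange it yields an explicit, dimension-free rate $1-C_M\leq 1/(2M^2)$, sharper than the rate $C_M=(1-1/(2M))^{2d}$ produced by the paper's tensor-spline $\phi$, and your Sobolev bound makes rigorous the paper's informal remark, preceding the proposition, that $|f*f_p|$ has one distributional derivative in $L^2$. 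One small repair: as written, your $C_M=1-\rho^2/(C_1^2M^2)$ depends on the path through $\rho$; since $\rho^2/C_1^2\leq 1/2$ uniformly, simply define $C_M=1-1/(2M^2)$, so that the constant is path-independent as the statement requires.
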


\begin{proof}
	By Proposition \ref{prop phi}, there exists a non-negative function $\phi$, such that $\hat\phi$ is continuous, decreasing along each Euclidean coordinate, $\supp(\hat\phi)=\overline{Q_1(0)}$, and $|\hat\phi(0)|=1$. Define $\phi_M$ by is Fourier transform, $\hat{\phi_M}(\xi)=\hat\phi(C_1^{-1}M^{-1}\xi)$, for $\xi\in\R^d$. By definition of $\P[M]$, for all $\xi\in\R^d$, 
	\[
	|\hat{\phi_M}(\xi)|^2
	\leq |\hat{f_0}(\xi)|^2+\sum_{q\in\P[M]} |\hat{f_q}(\xi)|^2.
	\]
	Plancherel's formula and this inequality imply
	\begin{align}
	\label{eq6}
	\begin{split}
	\|U[p]f*f_0\|_{L^2}^2+\sum_{q\in\P[M]} \|U[p]f*f_q\|_{L^2}^2
	&\geq \|U[p]f*\phi_M\|_{L^2}^2 \\
	&=\|\ |U[p']f*f_s|*\phi_M\|_{L^2}^2,
	\end{split}
	\end{align}
	where $p=(p',s)$. By definition of $C_1$, there exists $\xi_s\in \R^d$ such that $\supp(\hat{f_s})\subset \overline{Q_{C_1/2}(\xi_s)}$. Applying triangle inequality to the right hand side of (\ref{eq6}) and using that $\phi\geq 0$, we have
	\begin{align}
	\label{eq7}
	\begin{split}
	\|\ |U[p']f*f_s|*\phi_M\|_{L^2}^2
	&\geq \| U[p']f*f_s*M_{\xi_s}\phi_M\|_{L^2}^2 \\
	&=\int_{\R^d} |\hat{U[p']f}(\xi)|^2|\hat{f_s}(\xi)|^2|\hat{\phi_M}(\xi-\xi_s)|^2\ d\xi.
	\end{split}
	\end{align}
	Using that $\hat\phi$ is decreasing along each Euclidean coordinate and the inclusion $\supp(\hat{f_s})\subset \overline{Q_{C_1/2}(\xi_s)}$, we have
	\begin{align*}
	C_M
	&=\inf_{\xi\in\supp(\hat{f_s})} |\hat{\phi_M}(\xi-\xi_s)|^2 \\
	&\geq \inf_{\xi\in \overline{Q_{C_1/2}(0)}} |\hat{\phi_M}(\xi)|^2 \\
	&=|\hat\phi(2^{-1}M^{-1},2^{-1}M^{-1},\dots,2^{-1}M^{-1})|^2
	>0.
	\end{align*}
	Inserting this into (\ref{eq7}) and applying Plancherel's formula yields
	\[
	\int_{\R^d} |\hat{U[p']f}(\xi)|^2|\hat{f_s}(\xi)|^2|\hat{\phi_M}(\xi-\xi_s)|^2\ d\xi
	\geq C_M \|U[p']f*f_s\|_{L^2}^2
	= C_M\|U[p]f\|_{L^2}^2.
	\]
	We have the trivial inequality $C_M\leq 1$, and observe that 
	\[
	\liminf_{M\to\infty} C_M
	\geq \liminf_{M\to\infty} |\hat\phi(2^{-1}M^{-1},2^{-1}M^{-1},\dots,2^{-1}M^{-1})|^2
	=|\hat\phi(0)|^2
	=1. 
	\]
\end{proof}

\begin{remark}
	We have two comments about this proposition.
	\begin{enumerate}[(a)]
		\item 
		This argument fails for wavelet frames. Indeed, we made use of the uniform tiling property in Proposition \ref{prop tiling} and that $\hat{f_p}$ is supported in a cube of side length $C_1$, where $C_1$ is independent of $p\in\P$.  
		\item
		Mallat showed numerically \cite[page 1345]{mallat2012group} that for $\S_\W$, most of the energy is concentrated in frequency decreasing paths. It might be possible to obtain a theoretical result of this nature by adapting his arguments. 
	\end{enumerate}
	
\end{remark}

We are ready to prove our second main theorem, which shows that $\S_\F[M,K]$ is an effective feature extractor. For the stability to diffeomorphisms, we unfortunately require the additional regularity assumption that $f$ is $(\epsilon,R)$ band-limited. We have further comments about this assumption in Section \ref{section discussion}. 

\begin{theorem}
	\label{thm2}
	Let $\F=\{f_0\}\cup\{f_p\colon p\in\P\}$ be a uniform covering frame.  
	\begin{enumerate}[(a)]\itemsep+1em
		
		\item 
		\underline{Upper bound}: For all $f\in L^2(\R^d)$ and integers $M,K\geq 1$,
		\[
		\|\S_\F[M,K](f)\|_{L^2\ell^2}\leq \|f\|_{L^2}.
		\]
		
		\item
		\underline{Lower bound}: Let $\epsilon\in [0,1)$ and $R>0$. There exist integers $K\geq 1$ and $M\geq C_1^{-1}R$ sufficiently large depending on $\epsilon$, such that for all $(\epsilon,R)$ band-limited functions $f\in L^2(\R^d)$,  
		\[
		\|\S_\F[M,K](f)\|_{L^2\ell^2}^2 \geq (C_M^K(1-\epsilon^2)-C_0^{K-1})\|f\|_{L^2}^2.
		\]
		
		\item
		\underline{Non-expansiveness}: For all $f,g\in L^2(\R^d)$ and integers $M,K\geq 1$, 
		\[
		\|\S_\F[M,K](f)-\S_\F[M,K](g)\|_{L^2\ell^2}
		\leq \|f-g\|_{L^2}.
		\]
		
		\item
		\underline{Translation contraction}: There exists a constant $C>0$ depending only on $\F$ such that for all $f\in L^2(\R^d)$, $y\in \R^d$, and integers $M,K\geq 1$, we have
		\[
		\|\S_\F[M,K](T_y f)-\S_\F[M,K](f)\|_{L^2\ell^2}
		\leq C|y|\|\nabla f_0\|_{L^1} \|f\|_{L^2}.
		\] 
		
		\item
		\underline{Additive diffeomorphism contraction}: Let $\epsilon\in [0,1)$ and $R>0$. There exists a universal constant $C>0$, such that for all $(\epsilon,R)$ band-limited $f\in L^2(\R^d)$, and all $\tau\in C^1(\R^d;\R^d)$ with $\|\nabla\tau\|_{L^\infty}\leq 1/(2d)$,
		\[
		\|\S_\F[M,K](T_\tau f)-\S_\F[M,K](f)\|_{L^2\ell^2}
		\leq C(R\|\tau\|_{L^\infty}+\epsilon)\|f\|_{L^2}.
		\]
	\end{enumerate}
\end{theorem}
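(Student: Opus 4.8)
The plan is to obtain (a), (c), (d) as direct restrictions of the infinite-width estimates, to reduce (e) to non-expansiveness, and to reserve the real work for the lower bound (b). For (a), (c), (d), the key observation is that $\P[M]^k\subset\P^k$ and the outer sum runs only over $k\le K$, so the coordinates of $\S_\F[M,K](f)$ form a subfamily of those of $\S_\F(f)$ with all omitted terms nonnegative. Thus (a) follows by discarding terms in (\ref{eq4}) and (c) by discarding terms in (\ref{eq5}). For (d) I would repeat the computation of Theorem \ref{thm1}(c) verbatim: the identity $U[p](T_yf)*f_0=U[p]f*T_yf_0$ together with Young's inequality gives
\[
\|\S_\F[M,K](T_yf)-\S_\F[M,K](f)\|_{L^2\ell^2}
\le \|T_yf_0-f_0\|_{L^1}\Big(\sum_{k=0}^{K}\sum_{p\in\P[M]^k}\|U[p]f\|_{L^2}^2\Big)^{1/2},
\]
after which I bound the inner sum by the full sum $\sum_{k=0}^\infty\sum_{p\in\P^k}\|U[p]f\|_{L^2}^2\le(1+\tfrac{1}{1-C_0})\|f\|_{L^2}^2$ coming from Proposition \ref{prop decay}, and use $\|T_yf_0-f_0\|_{L^1}\le|y|\,\|\nabla f_0\|_{L^1}$.

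Part (e) follows from (c) along the lines of Theorem \ref{thm1}(d). Using the non-expansiveness of $\S_\F[M,K]$ in place of Theorem \ref{thm1}(b), with $f_R=f*\phi_R$ as in that proof, the triangle inequality yields
\[
\|\S_\F[M,K](T_\tau f)-\S_\F[M,K](f)\|_{L^2\ell^2}
\le \|f-f_R\|_{L^2}+\|T_\tau f_R-f_R\|_{L^2}+\|T_\tau f_R-T_\tau f\|_{L^2},
\]
and the three terms are estimated exactly as before, since those estimates concern only $L^2(\R^d)$ and are independent of the scattering transform.

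The substance of the theorem is the lower bound (b). Write $A_k=\sum_{p\in\P[M]^k}\|U[p]f*f_0\|_{L^2}^2$ and $E_k=\sum_{p\in\P[M]^k}\|U[p]f\|_{L^2}^2$, so that $\|\S_\F[M,K](f)\|_{L^2\ell^2}^2=\sum_{k=0}^K A_k$, and, since the complex modulus preserves the $L^2$ norm, $E_{k+1}=\sum_{p\in\P[M]^k}\sum_{q\in\P[M]}\|U[p]f*f_q\|_{L^2}^2$. Summing Proposition \ref{prop path} over $p\in\P[M]^k$ produces the layerwise inequality $A_k+E_{k+1}\ge C_M E_k$ for $k\ge1$. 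For the base layer $k=0$, where $E_0=\|f\|_{L^2}^2$, I instead invoke Proposition \ref{prop tiling}: since $M\ge C_1^{-1}R$ forces $Q_R(0)\subset\overline{Q_{C_1M}(0)}$, Plancherel's theorem, (\ref{eq tiling}), and the band-limited hypothesis give
\[
A_0+E_1=\int_{\R^d}\Big(|\hat{f_0}(\xi)|^2+\sum_{q\in\P[M]}|\hat{f_q}(\xi)|^2\Big)|\hat f(\xi)|^2\,d\xi
\ge \int_{Q_R(0)}|\hat f(\xi)|^2\,d\xi\ge (1-\epsilon)^2\|f\|_{L^2}^2.
\]

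To combine the layerwise bounds without accumulating intermediate energies of the wrong sign, I would form the weighted sum $\sum_{k=0}^K C_M^{K-k}A_k$, substitute $A_k\ge C_M E_k-E_{k+1}$ (and the base estimate for $A_0$), and note that the result telescopes to $C_M^K(1-\epsilon)^2\|f\|_{L^2}^2-E_{K+1}$. Since $C_M^{K-k}\le1$ and each $A_k\ge0$, this weighted sum is at most $\|\S_\F[M,K](f)\|_{L^2\ell^2}^2$, while Proposition \ref{prop decay} controls the residual, $E_{K+1}\le\sum_{p\in\P^{K+1}}\|U[p]f\|_{L^2}^2\le C_0^{K}\|f\|_{L^2}^2\le C_0^{K-1}\|f\|_{L^2}^2$. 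This gives a bound of the claimed form, with $(1-\epsilon)^2$ in place of $(1-\epsilon^2)$. To ensure the bound is nontrivial I would first choose $M\ge C_1^{-1}R$ large enough that $C_M>C_0$, which is possible because $C_M\to1$ as $M\to\infty$ while $C_0<1$ is fixed, and then $K$ large enough that $C_M^K(1-\epsilon)^2>C_0^{K-1}$, which holds since $(C_0/C_M)^K\to0$. The main obstacle is precisely this coupling: Proposition \ref{prop path} only controls one layer at a time, so the weighting by $C_M^{K-k}$ is essential to make the telescoping collapse, and the opposition between $C_M\to1$ and the fixed decay rate $C_0<1$ is what forces both parameters large and guarantees the truncated transform has trivial kernel.
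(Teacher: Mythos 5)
Your proposal is correct and follows essentially the paper's own proof: parts (a) and (c)--(e) are obtained by dominating the truncated transform by the full one and invoking Theorem \ref{thm1}, and part (b) uses the same three ingredients---the tiling/band-limited base estimate, layerwise iteration of Proposition \ref{prop path}, and Proposition \ref{prop decay} to absorb the residual---so that your telescoping weighted sum $\sum_{k=0}^{K}C_M^{K-k}A_k$ is algebraically identical to the paper's iterated substitution by powers of $C_M^{-1}$ followed by multiplication by $C_M^{K}$. The discrepancy you flag, $(1-\epsilon)^2$ in place of $(1-\epsilon^2)$, is inherited from the paper rather than a gap on your side: the stated definition $\|\hat f\|_{L^2(Q_R(0))}\geq(1-\epsilon)\|f\|_{L^2}$ squares to give exactly your constant, the paper's own proof asserts the stronger one without justification, and either constant supports the final choice of $M$ and $K$ since both are positive and independent of $K$.
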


\begin{proof}
	\indent
	\begin{enumerate}[(a)]\itemsep+0.5em
		
		\item 
		We apply Theorem \ref{thm1} to obtain,
		\[
		\|\S_\F[M,K](f)\|_{L^2\ell^2}
		\leq \|\S_\F(f)\|_{L^2\ell^2}
		=\|f\|_{L^2}.
		\]
		
		\item
		By the tiling property (\ref{eq tiling}), the assumption that $f$ is almost band-limited, and that $C_1M\geq R$, we have
		\[
		\|f\|_{L^2}^2
		=\|f*f_0\|_{L^2}^2+\sum_{p\in\P[M]}\|f*f_p\|_{L^2}^2+\epsilon^2\|f\|_{L^2}^2. 
		\]
		Applying Proposition \ref{prop path} to the summation over $\P[M]$, we obtain,
		\[
		(1-\epsilon^2)\|f\|_{L^2}^2
		\leq \|f*f_0\|_{L^2}^2 + C_M^{-1}\sum_{p\in\P[M]}\|U[p]f*f_0\|_{L^2}^2 + C_M^{-1}\sum_{p\in\P[M]^2}\|U[p]f\|_{L^2}^2.
		\] 
		Continuing to apply Proposition \ref{prop path}, we see that
		\[
		(1-\epsilon^2)\|f\|_{L^2}^2
		\leq \sum_{k=0}^K C_M^{-k} \sum_{p\in\P[M]^k} \|U[p]f*f_0\|_{L^2}^2 + C_M^{-K}\sum_{p\in\P[M]^K} \|U[p]f\|_{L^2}^2.
		\]
		Using that $C_M\in (0,1)$ and Proposition \ref{prop decay}, we have
		\[
		(1-\epsilon^2)\|f\|_{L^2}^2
		\leq C_M^{-K}\sum_{k=0}^K \sum_{p\in\P[M]^k} \|U[p]f*f_0\|_{L^2}^2 + C_M^{-K} C_0^{K-1} \|f\|_{L^2}^2.
		\]
		Rearranging, we obtain
		\[
		\|\S_\F[M,K](f)\|_{L^2\ell^2}^2
		=\sum_{k=0}^K \sum_{p\in\P[M]^k} \|U[p]f*f_0\|_{L^2}^2
		\geq (C_M^K(1-\epsilon^2)-C_0^{K-1})\|f\|_{L^2}^2.
		\]
		Since $C_M\to 1$ as $M\to\infty$ and $C_0\in (0,1)$ independent of $M$, for fixed $\epsilon$, we can pick $K$ and $M$ sufficiently large so that $C_M^K(1-\epsilon^2)-C_0^{K-1}>0$. Note that this term represents the error due to approximating $f$ by a band-limited function, the horizontal truncation, and the depth truncation.

		\item[(c)-(e)]
		For any $f,g\in L^2(\R^d)$, we have
		\[
		\|\S_\F[M,K](f)-\S_\F[M,K](g)\|_{L^2\ell^2}^2
		\leq \|\S_\F(f)-\S_\F(g)\|_{L^2\ell^2}^2.
		\]
		Applying Theorem \ref{thm1} completes the proof. 
	\end{enumerate}	
\end{proof}

\section{A motivational experiment}
\label{section motivation}

We propose a simple algorithm that computes $\S_\F[M,K](f)$ for an input $f$, and we call this algorithm the \emph{fast Fourier scattering transform}. 

\begin{algorithm}
	Fast Fourier scattering transform. \\
	\indent \textbf{Input:} Function $f$, network depth $K\geq 1$, network width $M\geq 1$ \\
	\indent \textbf{Construct:} Frame elements $\{f_0\}\cup\{f_p\colon p\in\P[M]\}$ \\
	\indent \textbf{for} $k = 1,2,\dots,K$ \\
	\indent\indent \textbf{for each} $p=(p',p_k)\in\P[M]^k$ \\
	\indent\indent\indent Compute $U[p]f=U[(p',p_k)]f = | U[p']f*f_{p_k}|$ and $U[p]f*f_0$ \\
	\indent\indent \textbf{end} \\
	\indent \textbf{end}
\end{algorithm}

\begin{remark}
	The reason we call this algorithm fast is because Theorem \ref{thm2} quantifies the amount of information lost due to truncation, so we do not have to calculate an enormous number of scattering coefficients. In fact, it is possible to make the algorithm even faster:
	\begin{enumerate}[(a)]\itemsep+.5em
		\item
		In many applications, the input function $f$ is real-valued. Suppose that for each $p\in\P[M]$, $\hat{f_p}$ is real-valued and there exists a unique $q\in\P[M]$ such that $\hat{f_p}(\xi)=\hat{f_q}(-\xi)$ for all $\xi\in\R^d$. Then, we have $|f*f_p|=|f*f_q|$. For any $k\geq 1$ and $p\in\P^k$, $U[p]f$ is also real-valued, so the same reasoning shows that for all $k\geq 1$ and $p\in\P[M]^k$, there exists a unique $q\in\P[M]^k$ such that $U[p]f=U[q]f$. Thus, we only need to compute half of the coefficients in the fast Fourier scattering transform. These assumptions hold, for example, if $\F$ is a Gabor frame defined in Proposition \ref{prop gabor} and the window $\hat{f_0}=\hat g$ is real-valued and symmetric about the origin.
		
		\item
		The proof of Proposition \ref{prop path} shows that coefficients of the form $|\ |f*f_p|*f_q|$ have small $L^2(\R^d)$ norm whenever $p\in\P[M]$, $q\in\P[N]$, and $M\ll N$. Since our algorithm still computes such coefficients, its runtime can be greatly reduced by not computing these coefficients. 
	\end{enumerate}
\end{remark}

Figure \ref{fig 3} is an example that compares the features generated by $\S_\F$ and $\S_\W$. The Matlab code that reproduces this figure, as well as Fourier scattering software, can be downloaded at \cite{li2017personal}. We use the publicly available ScatNet toolbox \cite{mallat2017scattering} to produce the analogous wavelet scattering features.

\begin{figure}[!]
	\hspace*{-8cm} \includegraphics[scale=0.6]{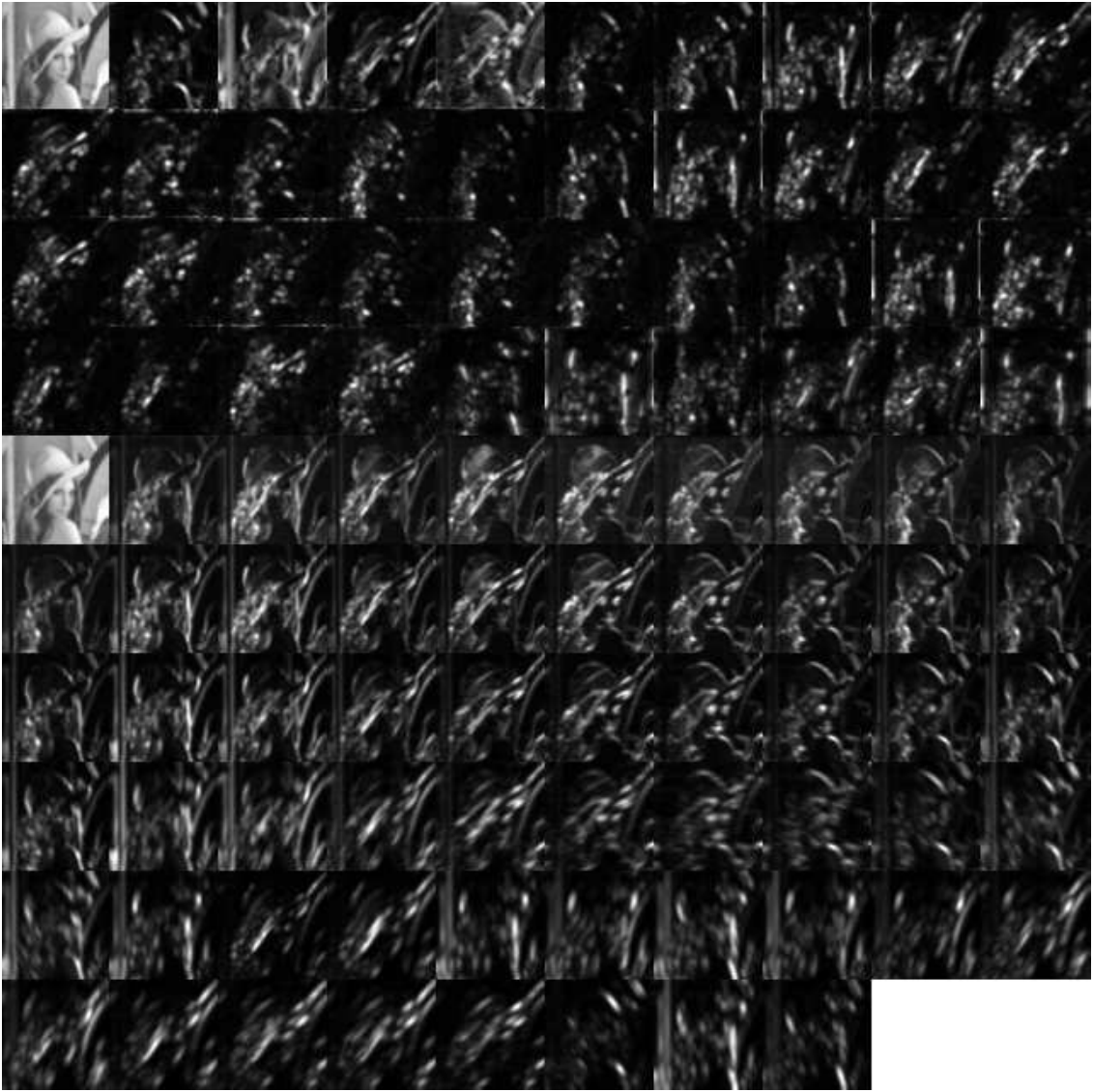}
	\caption{Fourier and windowed scattering transform coefficients whose norms exceed 0.5\% and ordered by depth. The top four rows display $(1,33,6)$ zero, first, and second order Fourier scattering transform coefficients. Bottom six rows display $(1,40,17)$ zero, first, and second order windowed scattering coefficients.}
	\label{fig 3}
\end{figure}

For the comparison, we use the standard $512\times 512$ Lena image as our model example. Due to its varied content, Lena image allows to emphasize the differences between $\S_\F$ and $\S_\W$. We interpret the square image as samples of a function $f$, namely
\[
\{f(m_1,m_2)\colon m_1=1,2,\dots,512,\ m_2=1,2,\dots,512\}. 
\]
In order to make a fair comparison, we chose appropriate parameters. $\S_\F[M,K]$ requires specification of $\F$ (for simplicity, we use a Gabor frame satisfying Proposition \ref{prop gabor} with parameter $a$ free to chose, $A=aI$, and real-valued even window $\hat{f_0}=\hat g$), the integer $M$ that controls the cardinality of $\P[M]$, and the depth of the network $K$. On the other hand, $\S_\W$ requires specification of the number of angles in the rotation group $G$, the range of dyadic scales (controlled by the coarsest scale $J$), and the depth of the network. It is appropriate to pick $a$ and $J$ such that $\hat{f_0}$ and $\hat{\varphi_{2^J}}$ are approximately supported in a ball of the same size. By this choice, the zero-th order coefficients of both transformations have approximately the same resolution. Next, we chose $M$ and $G$ so that both transforms have the same number of first-order coefficients. For both, we use a network with 2 layers. With these considerations, we computed $(1,40,560)$ and $(1,40,600)$ zero, first, and second order coefficients for $\S_\F$ and $\S_\W$, respectively. 

While it is interesting to look at each coefficient, there are obviously too many to display. Hence, Figures \ref{fig 3} illustrates the Fourier and wavelet scattering coefficients with $L^2(\R^d)$ norm greater than $0.005*\|f\|_{L^2}$. In order to clearly display the coefficients, each one is normalized to have a maximum of 1 (but only when plotted). The norm of the coefficients decrease as a function of the depth, so we chose $0.005$ as the threshold parameter in order to display several of the second-order coefficients. While we only show the largest ones, it does not necessarily mean that they are the most important since it is plausible that smaller coefficients contain informative features. 

We first concentrate on the Fourier scattering features. As seen in the Figure \ref{fig 3}, the first-order coefficients extract distinct features of the image, and in particular, extract the most prominent edges in image. While the first-order coefficients capture individual features -- various components of the hat, her hair, the feature, the background, and her facial features -- the second-order coefficients appear to capture a combination of features. In general, it is difficult to substantiate what functions of the form $|\ |f*f_p|*f_q|$ intuitively mean. This is partly because the Fourier transform is the standard tool for analyzing convolutions, but is not well suited for handling non-linear operators such as the complex modulus. For the wavelet case, Mallat has heuristically argued that coefficients of the form $|\ |f*\psi_{2^j,r}|*\psi_{2^k,s}|$ describe interactions between scales $2^{-j}$ and $2^{-k}$ \cite{mallat2016understanding}. By the same reasoning, coefficients of the form $|\ |f*f_p|*f_q|$ describe the interactions between oscillations arising from the uniform Fourier scales $|p|_\infty$ and $|q|_\infty$. 

Let us compare the features generated by $\S_\F$ and $\S_\W$. Their zero-order coefficients are almost identical due to our parameter choices; recall that we chose $f_0$ and $\varphi_{2^J}$ to have approximately the same resolution. In some sense, the first-order coefficients illustrate the main differences between wavelet and Gabor transforms. Indeed, wavelets are multi-scale representations, so the first-order coefficients of $\S_\W$ have higher resolution than those of $\S_\F$, and the latter are of a fixed low resolution. 

By inspection, the second-order coefficients of $\S_\F$ and $\S_\W$ appear to capture significantly different features. For example, the second-order Fourier scattering coefficients mainly capture the oscillatory pattern of the feather, whereas the second-order windowed scattering coefficients focused more on her hair and on the round shapes in the image. Again, it is hard to precisely describe what information the second-order terms capture. Finally, $\S_\F$ has a fewer number of second-order coefficients that exceed the threshold parameter, which is consistent with our theory that $\S_\F$ satisfies an exponential decay of energy property.

\section{Discussion}
\label{section discussion}

In addition to the papers from Mallat's group \cite{mallat2012group, bruna2013invariant,waldspurger2016exponential}, we note that Wiatowski and B\"{o}lcskei \cite{wiatowski2015mathematical} also constructed a scattering-like transform, which they called the \emph{generalized feature extractor} $\Phi$. For other examples of combining artificial neural networks with harmonic analysis, see \cite{candes1999harmonic, shaham2016provable} and the references therein. In this section, we compare our results with the aforementioned papers on scattering, in the following ways. 

\begin{enumerate}[(a)] \itemsep+.5em
	
	\item 
	\underline{Generality and flexibility}. We used Gabor frames as the model example, but our theory applies to any uniform covering frame. This is an important point because elements of Gabor and wavelet frames are algebraically related; in contrast, the learned filters in convolutional neural networks are independent of each other and typically do not satisfy such rigid relationships \cite{lee2008sparse}. 
	
	So far, the theoretical papers of Mallat and collaborators have exclusively focused on wavelet scattering transforms. 
	
	Wiatowski and B\"{o}lcskei \cite{wiatowski2015mathematical} studied a scattering framework that is more general than ours and Mallat's. Instead of using the same semi-discrete wavelet frame for each layer of the network, they used (not necessarily tight) semi-discrete frames, and allowed each layer of the transformation to use a different frame. Their theory allowed for a variety of non-linearities at each layer, including the complex modulus, and allowed sub-sampling to be incorporated into each layer.
	
	\item
	\underline{Energy conservation}. We showed that $\S_\F$ is energy conserving for any $f\in L^2(\R^d)$, which was a consequence of the exponential decay of energy property, Proposition \ref{prop decay}.	
	
	Mallat showed that $\S_\W$ is energy preserving but that result required a restrictive and technical \emph{admissibility condition} on $\psi$, see \cite[pages 1342-1343]{mallat2012group}. This is completely different from the usual admissibility condition related to the invertibility of the continuous wavelet transform. We cannot offer an intuitive explanation for what Mallat's complicated admissibility condition means. Numerical calculations have supported the assertion that an analytic cubic spline Battle-Lemari\'{e} wavelet is admissible for $d=1$, \cite[page 1345]{mallat2012group}. To our best knowledge, it is currently unknown if other Littlewood-Paley wavelets, such as curvelets \cite{candes2004new} or shearlets  \cite{guo2006sparse,guo2007optimally,czaja2014anisotropic}, are admissible.
	
	Mallat's argument for energy conservation is qualitative and cannot be used to deduce a quantitative result because he approximated $f\in L^2(\R^d)$ with a function in the logarithmic Sobolev space. Motivated by this observation, Waldspurger \cite[Theorem 3.1]{waldspurger2016exponential} gave mild assumptions on the generating wavelet $\psi$, see the reference for the explicit hypotheses. Under these assumptions, the following holds: there exists $r>0$ and $a>1$ such that for any integer $k\geq 2$ and real-valued $f\in L^2(\R)$, 
	\begin{equation}
	\label{eq8}
	\sum_{\lambda\in\Lambda^k} \|U[\lambda]f\|_{L^2}^2
	\leq \int_{\R} |\hat f(\xi)| ^2 \(1-\exp\(-\frac{2\xi^2}{r^2 a^{2k}}\)\)\ d\xi. 
	\end{equation}
	This inequality quantifies the intuition that wavelet scattering coefficients become progressively concentrated in lower frequency regions. For a $\psi$ satisfying these assumptions, she showed that the resulting scattering transform conserves energy. However, her result only applies to one-dimensional real-valued functions, but it is possible that they can be adapted to more general situations. In particular, they do not apply to curvelets and shearlets.
	
	In general, $\Phi$ is not energy preserving and possibly has trivial kernel. This is not surprising, because the lower bounds on $\|\S_\F(f)\|_{L^2\ell^2}$ and $\|\S_\W (f)\|_{L^2\ell^2}$ are related to the amount of energy the complex modulus pushes from high to low frequencies from one layer to the next. Thus, it would be surprising if any kind of frame and any type of nonlinearity has the same kind of effect.
	
	\item 
	\underline{Non-expansiveness}. The non-expansiveness property holds for $\S_\F$, $\S_\W$, and $\Phi$ because this is a consequence of the frame property and network structure.
	
	\item
	\underline{Translation contraction estimate}. Wiatowski and B\"{o}lcskei did not provide a translation estimate for $\Phi$. 
	
	Our translation estimate, Theorem \ref{thm1}c, is similar to Mallat's translation estimate \cite[Theorem 2.10]{mallat2012group}: There exists a constant $C>0$ such that for all $f\in L^2(\R^d)$ and $y\in\R^d$, 
	\[
	\|\S_\W(T_yf)-\S_\W(f)\|_{L^2\ell^2}
	\leq C2^{-J}|y|\(\sum_{k=0}^\infty \sum_{\lambda\in\Lambda^k} \|U[\lambda] f\|_{L^2}^2\).
	\]
	To see why, our $\|\nabla f_0\|_{L^1}$ plays the same role as his $C 2^{-J}$ because if $f_0(x)=2^{-dJ}\phi(2^{-J}x)$ for some smooth $\phi\in L^1(\R^d)$, like in Mallat's case, then 
	\[
	\|\nabla f_0\|_{L^1} 
	=\|\nabla \phi\|_{L^1}2^{-J}
	=C2^{-J}.
	\]
	The only difference is that our inequality is more transparent because it depends on $\|f\|_{L^2}$, whereas Mallat's estimate depends on the more complicated term $\sum_{k=0}^\infty \sum_{\lambda\in\Lambda^k}\|U[\lambda]f\|_{L^2}^2$. This term is finite if $f$ belongs to a certain logarithmic Sobolev space and $\psi$ satisfies the admissibility condition.
	
	\item
	\underline{Diffeomorphism contraction estimate}. Our diffeomorphism estimate, Theorem \ref{thm1}d, is essentially identical to the corresponding estimate for $\Phi$ \cite[Theorem 1]{wiatowski2015mathematical}. 
	
	Mallat's diffeomorphism estimate \cite[Theorem 2.12]{mallat2012group} is quite different. It says, for any $\tau\in C^2(\R^d;\R^d)$ with $\|\nabla\tau\|_{L^\infty}$ sufficiently small, there exists $C(J,\tau)>0$ such that for all $f\in L^2(\R^d)$,
	\begin{equation}
	\label{eq9}
	\|\S_\W(T_\tau f)-\S_\W(f)\|_{L^2\ell^2}
	\leq C(J,\tau) \sum_{k=0}^\infty \sum_{\lambda\in\Lambda^k}\|U[\lambda]f\|_{L^2}.
	\end{equation}
	We caution that this estimate is only meaningful if $\sum_{k=0}^\infty \sum_{\lambda\in\Lambda^k} \|U[\lambda]f\|_{L^2}<\infty$; we expect that characterizing this class of functions is a difficult task. A sufficient (but perhaps not necessary) condition for this term being finite is that $f$ belongs to a logarithmic Sobolev space and $\psi$ satisfies the admissibility condition, and in which case, we have
	\[
	\sum_{k=0}^\infty \sum_{\lambda\in\Lambda^k} \|U[\lambda]f\|_{L^2}
	\leq \sum_{k=0}^\infty \sum_{\lambda\in\Lambda^k} \|U[\lambda]f\|_{L^2}^2 
	<\infty.
	\]
	For example, a function belongs to this logarithmic Sobolev space if its average modulus of continuity is bounded, and this condition is much weaker than band-limited. The point here is that, the currently known results for both $\S_\F$ and $\S_\W$ require additional regularity assumptions on $f$ in order to establish stability to diffeomorphisms, and removing all regularity assumptions appears to be challenging.
	
	On the other hand, the inequality (\ref{eq9}) has applications to finite depth wavelet scattering networks. Indeed, if one considers a transform that only includes $K$ layers, then the summation on the right hand side terminates at $k=K$. Then we have
	\[
	C(J,\tau) \sum_{k=0}^K \sum_{\lambda\in\Lambda^k} \|U[\lambda]f\|_{L^2}
	\leq C(J,\tau) (K+1)\|f\|_{L^2},
	\]
	and this inequality holds without additional regularity assumptions on $f$. As we have already mentioned, it is of interest if one could upper bound the left hand side independent of $K$. 
	
	\item
	\underline{Rotational invariance}. By exploiting that $\W$ is partially generated by a finite rotation group $G$, Mallat defined a variant of the windowed scattering operator $\tilde\S_\W$ that is $G$-invariant: $\tilde\S_\W(f\circ r)=\tilde\S_\W(f)$ for all $f\in L^2(\R^d)$ and $r\in G$, see \cite[Section 5]{mallat2012group}. In the companion paper \cite{czaja2016uniform}, we construct uniform covering frames that are partially generated by $G$, and define the $G$-invariant rotational Fourier scattering transform $\tilde\S_\F$. In general, it is not possible to modify $\Phi$ in order to obtain a $G$-invariant $\tilde\Phi$, because the underlying frame elements need to be ``compatible" with the action of $G$.  
	
	\item
	\underline{Finite scattering networks}. The main advantage of the uniform covering frame approach is that we have a theory for appropriate truncations of $\S_\F$, and in particular, the lower bound in Theorem \ref{thm2}. Of course, the lower bound requires additional regularity assumptions, but in view of the discussion in Section \ref{section truncation}, some kind of assumption is necessary.
	
	In contrast, it is not known whether the analogous result holds for a finite width and depth truncation of $\S_\W$ because there is no known quantitative analogues of Propositions \ref{prop decay} and \ref{prop path} for the wavelet case. It might be possible that Waldspurger's quantitative estimate (\ref{eq8}) can be used to obtain something similar to Proposition \ref{prop decay}, but as we remarked earlier, her results only hold in one dimension. After the completion of the first draft of this paper, Wiatowski, Grohs, and B{\"o}lcskei \cite{wiatowski2017energy} used ideas similar to Waldspurger's to prove the exponential decay of energy property for the wavelet case, but their result only holds in dimension $d=1$ and requires several restrictive assumptions including Sobolev regularity of $f$. While their results have applications to finite depth networks, they do not address finite width networks. 
	
	Finally, there is no analogous energy decay result for the generalized feature extractor $\Phi$. It would be surprising if it were possible to prove that property without additional assumptions on the underlying frame. 
\end{enumerate}

\section{Acknowledgements} 
This work was supported in part by the Defense Threat Reduction Agency grant HDTRA 1-13-1-0015 and by Army Research Office grant W911 NF-16-1-0008. The authors thank the anonymous reviewers for their helpful comments and feedback. The second named author thanks fellow graduate students, Chae Clark, Yiran Li, and Mike Pekala, for helpful discussions.

\bibliography{TFscatteringbib}

\begin{thebibliography}{10}

\bibitem{arivazhagan2006texture}
Sundaram Arivazhagan, L.~Ganesan, and S.~Priyal.
\newblock Texture classification using {Gabor} wavelets based rotation
  invariant features.
\newblock {\em Pattern Recognition Letters}, 27(16):1976--1982, 2006.

\bibitem{benedetto1996harmonic}
John~J. Benedetto.
\newblock {\em Harmonic Analysis and Applications}.
\newblock CRC Press, 1996.

\bibitem{benedetto2010integration}
John~J. Benedetto and Wojciech Czaja.
\newblock {\em Integration and Modern Analysis}.
\newblock Birkh\"auser Boston, Inc., Boston, 2009.

\bibitem{benedetto1993gabor}
John~J. Benedetto and David~F. Walnut.
\newblock Gabor frames for {L2} and related spaces.
\newblock {\em Wavelets: mathematics and applications}, pages 97--162, 1993.

\bibitem{brent2015note}
Richard~P. Brent, Judy-Anne~H. Osborn, and Warren~D. Smith.
\newblock Note on best possible bounds for determinants of matrices close to
  the identity matrix.
\newblock {\em Linear Algebra and its Applications}, 466:21--26, 2015.

\bibitem{bruna2013invariant}
Joan Bruna and St{\'e}phane Mallat.
\newblock Invariant scattering convolution networks.
\newblock {\em IEEE Transactions on Pattern Analysis and Machine Intelligence},
  35(8):1872--1886, 2013.

\bibitem{candes1999harmonic}
Emmanuel~J. Cand{\`e}s.
\newblock Harmonic analysis of neural networks.
\newblock {\em Applied and Computational Harmonic Analysis}, 6(2):197--218,
  1999.

\bibitem{candes2004new}
Emmanuel~J. Cand{\`e}s and David~L. Donoho.
\newblock New tight frames of curvelets and optimal representations of objects
  with piecewise ${C}^2$ singularities.
\newblock {\em Communications on Pure and Applied Mathematics}, 57(2):219--266,
  2004.

\bibitem{czaja2014anisotropic}
Wojciech Czaja and Emily~J. King.
\newblock Anisotropic shearlet transforms for ${L}^2(\mathbb{R}^k)$.
\newblock {\em Mathematische Nachrichten}, 287(8-9):903--916, 2014.

\bibitem{czaja2016uniform}
Wojciech Czaja and Weilin Li.
\newblock Rotationally invariant time-frequency scattering transforms.
\newblock {\em Preprint}, 2017.

\bibitem{czaja2016discrete}
Wojciech Czaja, Benjamin Manning, James~M. Murphy, and Kevin Stubbs.
\newblock Discrete directional {G}abor frames.
\newblock {\em Applied and Computational Harmonic Analysis}, 2016.

\bibitem{daubechies1990wavelet}
Ingrid Daubechies.
\newblock The wavelet transform, time-frequency localization and signal
  analysis.
\newblock {\em IEEE Transactions on Information Theory}, 36(5):961--1005, 1990.

\bibitem{daubechies1992ten}
Ingrid Daubechies.
\newblock {\em Ten Lectures on Wavelets}, volume~61.
\newblock SIAM, 1992.

\bibitem{daugman1985uncertainty}
John~G. Daugman.
\newblock Uncertainty relation for resolution in space, spatial frequency, and
  orientation optimized by two-dimensional visual cortical filters.
\newblock {\em Journal of the Optical Society of America}, 2(7):1160--1169,
  1985.

\bibitem{daugman1988complete}
John~G. Daugman.
\newblock Complete discrete 2-{D} {G}abor transforms by neural networks for
  image analysis and compression.
\newblock {\em IEEE Transactions on Acoustics, Speech, and Signal Processing},
  36(7):1169--1179, 1988.

\bibitem{grafakos2008gabor}
Loukas Grafakos and Christopher Sansing.
\newblock Gabor frames and directional time--frequency analysis.
\newblock {\em Applied and Computational Harmonic Analysis}, 25(1):47--67,
  2008.

\bibitem{grohs2015alpha}
Philipp Grohs, Sandra Keiper, Gitta Kutyniok, and Martin Sch{\"a}fer.
\newblock $\alpha$-molecules.
\newblock {\em Applied and Computational Harmonic Analysis}, 2015.

\bibitem{guo2006sparse}
Kanghui Guo, Gitta Kutyniok, and Demetrio Labate.
\newblock Sparse multidimensional representations using anisotropic dilation
  and shear operators.
\newblock {\em Wavelets and Splines (Athens, GA, 2005)}, pages 189--201, 2006.

\bibitem{guo2007optimally}
Kanghui Guo and Demetrio Labate.
\newblock Optimally sparse multidimensional representation using shearlets.
\newblock {\em SIAM Journal on Mathematical Analysis}, 39(1):298--318, 2007.

\bibitem{guo2006wavelets}
Kanghui Guo, Demetrio Labate, Wang-Q Lim, Guido Weiss, and Edward Wilson.
\newblock Wavelets with composite dilations and their {MRA} properties.
\newblock {\em Applied and Computational Harmonic Analysis}, 20(2):202--236,
  2006.

\bibitem{hamamoto1998gabor}
Yoshihiko Hamamoto, Shunji Uchimura, Masanori Watanabe, Tetsuya Yasuda,
  Yoshihiro Mitani, and Shingo Tomita.
\newblock A {Gabor} filter-based method for recognizing handwritten numerals.
\newblock {\em Pattern Recognition}, 31(4):395--400, 1998.

\bibitem{hinton2012deep}
Geoffrey Hinton, Li~Deng, Dong Yu, George~E. Dahl, Abdel-rahman Mohamed,
  Navdeep Jaitly, Andrew Senior, Vincent Vanhoucke, Patrick Nguyen, Tara~N.
  Sainath, et~al.
\newblock Deep neural networks for acoustic modeling in speech recognition.
\newblock {\em IEEE Signal Processing Magazine}, 29(6):82--97, 2012.

\bibitem{hinton2006fast}
Geoffrey~E. Hinton, Simon Osindero, and Yee-Whye Teh.
\newblock A fast learning algorithm for deep belief nets.
\newblock {\em Neural Computation}, 18(7):1527--1554, 2006.

\bibitem{hirn2015quantum}
Matthew Hirn, Nicolas Poilvert, and Stephane Mallat.
\newblock Quantum energy regression using scattering transforms.
\newblock {\em arXiv preprint arXiv:1502.02077}, 2015.

\bibitem{kong2003palmprint}
Wai~Kin Kong, David Zhang, and Wenxin Li.
\newblock Palmprint feature extraction using {2-D Gabor} filters.
\newblock {\em Pattern Recognition}, 36(10):2339--2347, 2003.

\bibitem{krizhevsky2012imagenet}
Alex Krizhevsky, Ilya Sutskever, and Geoffrey~E. Hinton.
\newblock {ImageNet} classification with deep convolutional neural networks.
\newblock In {\em Advances in Neural Information Processing Systems}, pages
  1097--1105, 2012.

\bibitem{le1990handwritten}
Yann LeCun, B.~Boser, John~S. Denker, D.~Henderson, Richard~E. Howard,
  W.~Hubbard, and Lawrence~D. Jackel.
\newblock Handwritten digit recognition with a back-propagation network.
\newblock In {\em Advances in Neural Information Processing Systems}. Citeseer,
  1990.

\bibitem{lee2008sparse}
Honglak Lee, Chaitanya Ekanadham, and Andrew~Y. Ng.
\newblock Sparse deep belief net model for visual area {V}2.
\newblock In {\em Advances in Neural Information Processing Systems}, pages
  873--880, 2008.

\bibitem{li2017personal}
Weilin Li.
\newblock Personal website.
\newblock {\em http://www.math.umd.edu/$\sim$wl298}, 2017.

\bibitem{mallat1999wavelet}
St{\'e}phane Mallat.
\newblock {\em A Wavelet Tour of Signal Processing}.
\newblock Academic Press, 1999.

\bibitem{mallat2012group}
St{\'e}phane Mallat.
\newblock Group invariant scattering.
\newblock {\em Communications on Pure and Applied Mathematics},
  65(10):1331--1398, 2012.

\bibitem{mallat2016understanding}
St{\'e}phane Mallat.
\newblock Understanding deep convolutional networks.
\newblock {\em Philosophical Transactions of the Royal Society A}, 374(2065),
  2016.

\bibitem{mallat2017scattering}
St{\'e}phane Mallat and et~al.
\newblock Scattering.
\newblock {\em http://www.di.ens.fr/data/scattering}, 2017.

\bibitem{pennebaker1992jpeg}
William~B. Pennebaker and Joan~L. Mitchell.
\newblock {\em JPEG: Still image data compression standard}.
\newblock Springer Science \& Business Media, 1992.

\bibitem{shaham2016provable}
Uri Shaham, Alexander Cloninger, and Ronald~R. Coifman.
\newblock Provable approximation properties for deep neural networks.
\newblock {\em Applied and Computational Harmonic Analysis}, 2016.

\bibitem{sifre2013rotation}
Laurent Sifre and St{\'e}phane Mallat.
\newblock Rotation, scaling and deformation invariant scattering for texture
  discrimination.
\newblock In {\em Proceedings of the IEEE Conference on Computer Vision and
  Pattern Recognition}, pages 1233--1240, 2013.

\bibitem{waldspurger2015these}
Irene Waldspurger.
\newblock {\em These de doctorat de l’{\'E}cole normale sup{\'e}rieure}.
\newblock PhD thesis, {\'E}cole normale sup{\'e}rieure, 2015.

\bibitem{waldspurger2016exponential}
Ir{\`e}ne Waldspurger.
\newblock Exponential decay of scattering coefficients.
\newblock {\em arXiv preprint arXiv:1605.07464}, 2016.

\bibitem{wendland1995piecewise}
Holger Wendland.
\newblock Piecewise polynomial, positive definite and compactly supported
  radial functions of minimal degree.
\newblock {\em Advances in Computational Mathematics}, 4(1):389--396, 1995.

\bibitem{wiatowski2015mathematical}
Thomas Wiatowski and Helmut B{\"o}lcskei.
\newblock A mathematical theory of deep convolutional neural networks for
  feature extraction.
\newblock {\em arXiv preprint arXiv:1512.06293}, 2015.

\bibitem{wiatowski2017energy}
Thomas Wiatowski, Philipp Grohs, and Helmut B{\"o}lcskei.
\newblock Energy propagation in deep convolutional neural networks.
\newblock {\em arXiv preprint arXiv:1704.03636}, 2017.

\end{thebibliography}
\bibliographystyle{plain}

\end{document}